\title[A substitute for Kazhdan's property (T)]{A substitute for Kazhdan's property (T) for universal non-lattices}
\author[N. Ozawa]{Narutaka Ozawa}
\address{RIMS, Kyoto University, \mbox{606-8502} Japan}
\email{narutaka@kurims.kyoto-u.ac.jp}
\subjclass{22D10; 46L89; 22D15}
\keywords{Kazhdan's property (T), real group algebras, sum of squares}
\date{\today}
\newtheorem*{mainthm}{Main Theorem}
\newtheorem{corA}{Corollary}
\newtheorem{thm}{Theorem}
\newtheorem{prop}[thm]{Proposition}
\newtheorem{cor}[thm]{Corollary}
\newtheorem{lem}[thm]{Lemma}
\newcommand{\IC}{\mathbb C}
\newcommand{\IM}{\mathbb M}
\newcommand{\IN}{\mathbb N}
\newcommand{\IP}{\mathbb P}
\newcommand{\IR}{\mathbb R}
\newcommand{\IZ}{\mathbb Z}
\newcommand{\cA}{\mathcal A}
\newcommand{\cH}{\mathcal H}
\newcommand{\cR}{\mathcal R}
\newcommand{\cS}{\mathcal S}
\newcommand{\cZ}{\mathcal Z}
\newcommand{\bH}{\mathbf H}
\newcommand{\Ga}{\Gamma}
\newcommand{\ve}{\varepsilon}
\newcommand{\vp}{\varphi}
\newcommand{\ux}{\bar{x}}
\newcommand{\uy}{\bar{y}}
\newcommand{\uz}{\bar{z}}
\newcommand{\re}{\mathrm e}
\newcommand{\rf}{\mathrm f}
\newcommand{\rE}{\mathrm E}
\DeclareMathOperator{\diag}{diag}
\newcommand{\ab}{\mathrm{ab}}
\newcommand{\her}{\mathrm{her}}
\DeclareMathOperator{\lh}{span}
\DeclareMathOperator{\Sym}{Sym}
\DeclareMathOperator{\tr}{tr}
\DeclareMathOperator{\EL}{EL}
\DeclareMathOperator{\SL}{SL}
\DeclareMathOperator{\Sq}{Sq}
\DeclareMathOperator{\Adj}{Adj}
\DeclareMathOperator{\Op}{Op}
\DeclareMathOperator{\ran}{ran}
\newcommand{\ip}[1]{\mathopen{\langle}#1\mathclose{\rangle}}
\newcommand{\ca}{$\mathrm{C}^*$-alge\-bra}
\newcommand{\cst}{\mathrm{C}^*}
\def\pprec{\mathrel{\scalebox{.8}[1]{$\prec$}\mkern-4mu%
  \scalebox{.4}[1]{$\prec$}\mkern-5.5mu\scalebox{.4}[1]{$\prec$}}}
\begin{document}
\begin{abstract}
The well-known theorem of Shalom--Vaserstein and Ershov--Jaikin-Zapirain 
states that the group $\mathrm{EL}_n(\mathcal{R})$, generated by 
elementary matrices over a finitely generated commutative ring $\mathcal{R}$, 
has Kazhdan's property (T) as soon as $n\geq3$. 
This is no longer true if the ring $\mathcal{R}$ is replaced by 
a commutative rng (a ring but without the identity) due to nilpotent quotients 
$\mathrm{EL}_n(\mathcal{R}/\mathcal{R}^k)$. In this paper, we prove that 
even in such a case the group $\mathrm{EL}_n(\mathcal{R})$ satisfies a certain 
property that can substitute property (T), provided that $n$ is large enough. 
\end{abstract}
\maketitle

\section{Introduction}
In this paper, we continue and extend the scope of the study 
of \cite{kno,kkn,nt2,nitsche,ncrag}, which develops the way of 
proving Kazhdan's property (T) via sum of squares methods. 
See \cite{bhv} for a comprehensive treatment of property (T).
Let $\Ga=\langle S \rangle$ be a group 
together with a finite symmetric generating subset $S$. 
We denote by $\IR[\Ga]$ the real group algebra with 
the involution $\ast$ that extends the inverse $\ast\colon x\mapsto x^{-1}$ on $\Ga$. 
The positive elements in $\IR[\Ga]$ are sums of (hermitian) squares, 
\[
\Sigma^2\IR[\Ga] := \{ \sum_i \xi_i^*\xi_i : \xi_i\in\IR[\Ga]\}
\]
and the combinatorial Laplacian is 
\[
\Delta := \frac{1}{2}\sum_{s\in S} (1-s)^*(1-s)
 = |S| - \sum_{s\in S} s \in \Sigma^2\IR[\Ga] .
\]
It is proved in \cite{ncrag} that the group $\Ga$ has property (T) if and only if 
there is $\ve>0$ that satisfies the inequality 
\[
\Delta^2 - \ve\Delta \in \Sigma^2\IR[\Ga].
\]
Property (T) for the so-called \emph{universal lattice} $\EL_n(\IZ[t_1,\ldots,t_d])$, 
$n\geq3$, is proved by Shalom (\cite{shalom}), 
Shalom--Vaserstein (\cite{vaserstein}), and 
Ershov--Jaikin-Zapirain (\cite{ej}).
See also \cite{mimura} for a simpler proof and \cite{kn,kno} 
for partial results. 
All the proofs (save for \cite{kno}) rely on relative property (T) 
of certain semi-direct products. 
Our interest in this paper is in the infinite index subgroup 
$\EL_n(\IZ\langle t_1,\ldots,t_d\rangle)$ of 
$\EL_n(\IZ[t_1,\ldots,t_d])$.
Here $\cR:=\IZ\langle t_1,\ldots,t_d\rangle$ is the commutative 
\emph{rng} (i.e., a ring, but without assuming the existence of the identity; 
$\cR$ is an ideal in the unitization $\cR^1$) of polynomials 
in $t_1,\ldots,t_d$ with zero constant terms and 
$\EL_n(\cR)\subset\SL_n(\cR^1)$ denotes the group generated by 
the elementary matrices over the rng $\cR$. 
The elementary matrices are those $e_{i,j}(r)\in \SL_n(\cR^1)$ 
with $1$'s on the diagonal, $r\in \cR$ in the $(i,j)$-th entry, 
and zeros everywhere else. 
The group $\EL_n(\cR)$ does not have property (T), 
because it has infinite nilpotent quotients $\EL_n(\cR/\cR^k)$. 
The group does not seem to admit a good analogue of relative property (T) 
phenomenon, either. 
Still, we prove via sum of squares methods that 
$\EL_n(\cR)$ satisfies a property 
that can substitute property (T). 

\begin{mainthm}
Let $d\in\IN$ and consider the commutative rng $\cR:=\IZ\langle t_1,\ldots,t_d \rangle$. 
Then there are $n_0\in\IN$ and $\ve>0$ such that for every $n\geq n_0$, 
the combinatorial Laplacians 
\[
\Delta:=\sum_{i\neq j}\sum_{r=1}^d (1-e_{i,j}(t_r))^*(1-e_{i,j}(t_r))
\]
for $\EL_n(\cR)$ and 
\[
\Delta^{(2)} := \sum_{i\neq j}\sum_{r,s=1}^d (1-e_{i,j}(t_rt_s))^*(1-e_{i,j}(t_rt_s))
\]
for $\EL_n(\cR^2)$ satisfy 
\[
 \Delta^2 - n \ve  \Delta^{(2)} \in \overline{\Sigma^2\IR[\EL_n(\cR)]}.
\]
\end{mainthm}

Here $ \overline{\Sigma^2\IR[\Ga]}$ denotes the archimedean closure 
of $\Sigma^2\IR[\Ga]$ (see Section~\ref{sec:prelim}). 
An upper bound for $n_0$ in Main Theorem is in principle explicitly calculable, 
but we do not attempt to do that (nor attempt to optimize the proof 
for a better estimate). 
We conjecture\footnote{NB: As the author is lame at computer, 
no computer experiments have been carried out.} 
that Main Theorem holds true with $n_0=3$ 
(in particular $n_0$ should not depend on $d$). 
Our proof is inspired by the work of Kaluba--Kielak--Nowak (\cite{kkn}) 
that proves property (T) for $\mathrm{Aut}( F_d )$ for $d\geq 5$ via 
computer calculations and an ingenious idea on stability. 
Our proof does not rely on computers, 
but instead on analysis by Boca and Zaharescu (\cite{bz}) 
on the almost Mathieu operators in the rotation {\ca}s.  
In fact, there is no known method of rigorously proving 
the result like Main Theorem by computers. 
This is because the conclusion is \emph{analytic} in nature---the archimedean 
closure is indispensable. 
See discussions in Section~\ref{sec:realgrpalg}. 

The above theorem has a couple of corollaries. 
The first one is a reminiscent of one of the standard 
definitions of property (T) (see Definition 1.1.3 in \cite{bhv}). 

\begin{corA}\label{cor:kaz}
For every $d$, if $n$ is large enough, then 
for every $\kappa>0$ there is $\delta>0$ satisfying the following property. 
For every orthogonal representation $\pi$ 
of $\EL_n(\IZ\langle t_1,\ldots,t_d \rangle)$ on a Hilbert 
space $\cH$ and every unit vector $v\in\cH$ with 
$\max_{i,j,r}\| v -\pi(e_{i,j}(t_r)) v\|\le\delta$, 
there is a vector $w\in\cH$ such that 
$\| v - w \| \le \kappa$ and 
\[
\lim_{l\to\infty} \max_{i,j,r} \| w - \pi(e_{i,j}(t_r^l)) w\| =0.
\]
\end{corA}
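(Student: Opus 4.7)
The starting point is the operator inequality implied by the Main Theorem: in any orthogonal representation $\pi$ of $\EL_n(\cR)$ on a Hilbert space $\cH$, one has $n\ve\,\pi(\Delta^{(2)}) \leq \pi(\Delta)^2$ as positive operators, by applying $\pi$ to $\Delta^2 - n\ve\Delta^{(2)} \in \overline{\Sigma^2\IR[\EL_n(\cR)]}$ and using that the archimedean closure is carried into the positive cone of $\IB(\cH)$.

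Given a unit vector $v$ with $\max_{i\neq j,r}\|v - \pi(e_{i,j}(t_r))v\| \leq \delta$, I first compute $\ip{\pi(\Delta)v,v} = \sum_{i\neq j,r}\|v - \pi(e_{i,j}(t_r))v\|^2 \leq n(n-1)d\,\delta^2$ and, using $\|\pi(\Delta)\| \leq 2n(n-1)d$, also $\|\pi(\Delta)v\|^2 \leq 2(n(n-1)d)^2\delta^2$. The operator inequality then yields $\ip{\pi(\Delta^{(2)})v,v} \leq 2n(n-1)^2 d^2\delta^2/\ve$, so each $\|v - \pi(e_{i,j}(t_r t_s))v\| \leq C\delta$ for a constant $C$ depending only on $n, d, \ve$.

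The natural candidate for $w$ is the orthogonal projection $Pv$ of $v$ onto the closed subspace
\[
\cH_0 := \{w \in \cH : \lim_{l\to\infty}\max_{i,j,r}\|w - \pi(e_{i,j}(t_r^l))w\| = 0\}.
\]
Since $e_{i,j}(t_r^l) \in \EL_n(\cR^l)$ for every $l\ge 1$, any vector fixed by some $\EL_n(\cR^k)$ lies in $\cH_0$. Setting $w = Pv$, the task reduces to bounding $\mathrm{dist}(v, \cH_0) \leq \kappa$ whenever $\delta$ is sufficiently small.

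I would prove this distance bound by contradiction via an ultralimit. If the statement fails at some $\kappa > 0$, there exist sequences $(\pi_m, v_m)$ with $\max_{i,j,r}\|v_m - \pi_m(e_{i,j}(t_r))v_m\| \to 0$, $\|v_m\| = 1$, and $\mathrm{dist}(v_m, \cH_{0,m}) \geq \kappa$. An ultralimit along a free ultrafilter produces a unit vector $v_\omega$ fixed by every generator of the ultralimit representation $\pi_\omega$, hence by $\EL_n(\cR)$, so that $v_\omega$ trivially lies in the ultralimit $\cH_{0,\omega}$. The main obstacle is transferring this to actual approximants $w_m \in \cH_{0,m}$ with $\|v_m - w_m\| \to 0$ along a cofinal set of indices; the quantitative input from the Main Theorem---the $O(\delta_m)$-invariance of $v_m$ under the $\EL_n(\cR^2)$-generators, and by suitable use also the control of higher-order Laplacians via functional calculus applied to $\pi_m(\Delta^{(2)})$---is what makes this possible, by defining $w_m$ as a spectral projection of $v_m$. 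Verifying that such a spectral projection lands in $\cH_{0,m}$ rather than merely in a subspace of vectors invariant only up to a finite tolerance is the technical heart of the argument, and is where the operator-monotonicity of the square root together with the nested normal structure $\EL_n(\cR) \supseteq \EL_n(\cR^2) \supseteq \EL_n(\cR^3) \supseteq \cdots$ must be combined with the Main Theorem's archimedean conclusion.
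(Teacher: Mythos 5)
Your reduction via the Main Theorem only controls the degree-two generators $e_{i,j}(t_rt_s)$, and the rest of your plan has a genuine gap. The compactness/ultralimit step does not work: if the statement fails you get $(\pi_m,v_m)$ with invariance defect tending to $0$ but $\mathrm{dist}(v_m,\cH_{0,m})\ge\kappa$, and the ultraproduct vector $v_\omega$ is indeed $\EL_n(\cR)$-invariant; but this yields no contradiction, because the asymptotically invariant subspace of the ultraproduct representation is in general much larger than (and not controlled by) the ultraproduct of the subspaces $\cH_{0,m}$ --- the condition defining $\cH_0$ is a limit over $l$, and that limit does not commute with the ultralimit over $m$. Producing actual approximants $w_m\in\cH_{0,m}$ is exactly the content of the corollary, so the argument as sketched is circular. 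You acknowledge the obstacle, but ``functional calculus applied to $\pi_m(\Delta^{(2)})$'' does not resolve it: a spectral cutoff of $\pi_m(\Delta^{(2)})$ only produces vectors slightly invariant under the degree-two generators, not vectors with uniform control over $e_{i,j}(t_r^l)$ for \emph{all} large $l$; moreover $\cH_0$ concerns powers $t_r^l$ of single variables, so iterating your inequality through $\cR^2,\cR^4,\dots$ would not even address the right generators.

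The missing quantitative idea, which is how the paper argues, is to apply the Main Theorem not once but to the free rng on $2d$ variables and specialize $T_r\mapsto t_r^{k}$, $S_r\mapsto t_r^{k+1}$. This gives, for one $\ve>0$ and all $k$ simultaneously, the inequalities $(\Delta_k+\Delta_{k+1})^2\ge\ve(\Delta_{2k}+\Delta_{2k+1}+\Delta_{2k+2})$, where $\Delta_k$ is the Laplacian for the generators $e_{i,j}(t_r^k)$; the point of the consecutive pair $(k,k+1)$ is that its products realize the three consecutive exponents $2k,2k+1,2k+2$. One then constructs $w$ by an explicit iteration of spectral truncations, $v_{k+1}:=\IP_{\Delta_{2^k}+\Delta_{2^k+1}\le(\delta_k/\rho^{k+2})^2}\,v_k$ with $\rho=\delta^{1/10}$, verifies that $\sum_k\|v_k-v_{k+1}\|\le\kappa$ while the invariance defects $\delta_k$ decay at a double-exponential rate, and finally treats an arbitrary exponent $l$ (not just powers of $2$) through its binary expansion and the recurrence $p_{m+1}=2p_m+a(m)$, reusing the same family of inequalities along the pairs $(p_m,p_m+1)$. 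Without this consecutive-exponent doubling device and the accompanying error bookkeeping, neither the existence of the limit vector $w$ nor the bound $\|v-w\|\le\kappa$ follows from what you have written.
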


We remark that a certain strengthening of the above corollary 
does not hold. Namely, there is an orthogonal representation $\pi$ 
of $\EL_n(\IZ\langle t_1,\ldots,t_d \rangle)$ that simultaneously 
admits asymptotically invariant vectors $v_k$ and a sequence 
$x_l \in \EL_n(\IZ\langle t_1^l,\ldots,t_d^l \rangle)$ 
with $\pi(x_l) \to 0$ in the weak operator topology.

\begin{corA}\label{cor:tau}
For every $d$, if $n$ is large enough, then the group 
$\EL_n(\IZ\langle t_1,\ldots,t_d \rangle)$ has property $(\tau)$ 
with respect to the finite quotients of the form $\EL_n(\cS)$, 
where $\cS$ is a finite unital quotients of $\IZ\langle t_1,\ldots,t_d \rangle$. 
\end{corA}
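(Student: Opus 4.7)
The plan is to combine the Main Theorem with Kazhdan's property~(T) for the universal lattice $\EL_n(\cR^1)=\EL_n(\IZ[t_1,\ldots,t_d])$ established in \cite{shalom,vaserstein,ej}. Fix an orthogonal representation $\pi$ of $\EL_n(\cR)$ that factors through the finite quotient $\EL_n(\cS)$ for some finite unital quotient $\cS$ of $\cR$ and has no non-zero invariant vectors; the goal is a uniform lower bound $\pi(\Delta)\geq\ve'\cdot I$.

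First, since $\cS$ is unital, the rng surjection $\cR\twoheadrightarrow\cS$ extends to a unital ring surjection $\cR^1\twoheadrightarrow\cS$, so $\pi$ lifts to a representation $\tilde\pi$ of $\EL_n(\cR^1)$ factoring through the same finite group $\EL_n(\cS)$, still without non-zero invariants. Property~(T) of $\EL_n(\cR^1)$ then provides a uniform Kazhdan constant $\eta>0$ such that $\pi(\Delta)+\pi(\Delta')\geq\eta\cdot I$, where $\Delta':=\sum_{i\neq j}(1-e_{i,j}(1_\cS))^*(1-e_{i,j}(1_\cS))$ is well-defined inside $\pi$ because $1_\cS$ lies in the image of $\cR\to\cS$.

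Next, I would use the Main Theorem inequality $\pi(\Delta)^2\geq n\ve\,\pi(\Delta^{(2)})$ to control $\pi(\Delta')$ by $\pi(\Delta^{(2)})$, and hence by $\pi(\Delta)^2$. The rationale is that $1_\cS=1_\cS\cdot 1_\cS$ lies in the image of $\cR^2$, so each $e_{i,j}(1_\cS)$ factors (modulo $\cR\twoheadrightarrow\cS$) as $\prod_\alpha e_{i,j}(t^\alpha)^{c_\alpha}$ over multi-indices with $|\alpha|\geq 2$; factors with $|\alpha|=2$ are among the $\Delta^{(2)}$-generators, while those with $|\alpha|\geq 3$ can be realised as iterated Steinberg commutators $[e_{i,k}(t^{\alpha_1}),e_{k,j}(t^{\alpha_2})]$, and the standard commutator estimate $\|(1-[u,v])w\|\leq 4\max(\|(1-u)w\|,\|(1-v)w\|)$ for unitaries propagates approximate invariance. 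A uniform bound $\pi(\Delta')\leq C\,\pi(\Delta^{(2)})$, combined with $\pi(\Delta^{(2)})\leq\pi(\Delta)^2/(n\ve)$, would yield
\[
\pi(\Delta)+\tfrac{C}{n\ve}\pi(\Delta)^2\;\geq\;\pi(\Delta)+\pi(\Delta')\;\geq\;\eta\cdot I,
\]
which forces $\pi(\Delta)\geq\ve'$ for a uniform $\ve'>0$, establishing property~$(\tau)$.

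The hard part will be producing the uniform-in-$\cS$ bound on $\pi(\Delta')$. The polynomial expression of $1_\cS$ in the $t_r$ can be arbitrarily complicated as $\cS$ varies, and the naive iterated-commutator estimate gives a constant that grows with this complexity, defeating uniformity. This is precisely where the analytic content of the Main Theorem---its formulation in the \emph{archimedean} closure $\overline{\Sigma^2\IR[\EL_n(\cR)]}$ rather than in $\Sigma^2\IR[\EL_n(\cR)]$ itself---should be indispensable: one would bootstrap the inequality $\Delta^2\geq n\ve\,\Delta^{(2)}$, iterating it inside each finite quotient, to propagate invariance from the $e_{i,j}(t_r)$ to arbitrary elementary matrices over $\cS$ (in particular to $e_{i,j}(1_\cS)$) with loss that does not depend on~$\cS$.
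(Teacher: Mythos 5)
Your opening move coincides with the paper's: since $\cS$ is unital, the representation extends to $\EL_n(\cR^1)=\EL_n(\IZ[t_1,\ldots,t_d])$, and property (T) of the universal lattice reduces everything to controlling the extra generators $e_{i,j}(1_\cS)$ uniformly in $\cS$. But that control is exactly the point, and your proposal does not supply it. The bound $\pi(\Delta')\le C\,\pi(\Delta^{(2)})$ with $C$ independent of $\cS$ is neither proved nor plausible as stated: $1_\cS$ is the image of a polynomial in the $t_r$ whose degree and number of terms are unbounded as $\cS$ varies, each Steinberg-commutator/propagation step costs a constant factor, and—as you yourself note—the naive count destroys uniformity. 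The final sentence, that one should ``bootstrap'' the Main Theorem inside each finite quotient so that the loss does not depend on $\cS$, is not an argument; it is precisely the missing content, and the Main Theorem by itself (an inequality between $\Delta^2$ and $\Delta^{(2)}$ for the fixed generators $t_r$, $t_rt_s$) gives no handle on an element like $1_\cS$ whose expression in the $t_r$ is uncontrolled.

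The paper closes this gap by a mechanism absent from your proposal. First, Corollary~\ref{cor:kaz} (itself proved by applying the Main Theorem in $2d$ variables with the substitutions $T_r\mapsto t_r^k$, $S_r\mapsto t_r^{k+1}$, yielding $(\Delta_k+\Delta_{k+1})^2\geq\ve(\Delta_{2k}+\Delta_{2k+1}+\Delta_{2k+2})$, and then iterating spectral cut-offs along doubling exponents with errors $\rho^k$, $\delta_k$ decaying at a fixed rate) produces from an almost $\EL_n(\cR)$-invariant unit vector $v$ a nearby vector $w$ with $\max_{i,j,r}\|w-\pi(e_{i,j}(t_r^l))w\|\to0$ as $l\to\infty$; the loss here is governed by absolute constants, not by $\cS$. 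Second, finite commutative ring theory takes over: in the finite ring $\cS$ suitable powers $t_r^{l_0}$ are idempotent, so $w$ is \emph{exactly} invariant under the corresponding $e_{i,j}(t_r^{l_0})$; the set of $r\in\cS$ with all $e_{i,j}(r)$ fixing $w$ is a subrng (by the Steinberg relations, with no constants lost because the invariance is exact), and since a finite commutative ring is a product of local rings this subrng contains $1_\cS$. Thus $w$ is $\EL_n(\IZ 1)$-invariant and almost $\EL_n(\cR)$-invariant, and property (T) of $\EL_n(\cR^1)$ produces the nonzero invariant vector. Your approach would need a substitute for both ingredients—the power-invariance statement of Corollary~\ref{cor:kaz} and the idempotent/local-ring step—before the appeal to property (T) of $\EL_n(\cR^1)$ can be made uniform over the finite quotients.
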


Property $(\tau)$ is a generalization of property (T) for finite quotients. 
See Section~\ref{sec:tau} for the definition 
and the proofs of the above corollaries. 
Corollary~\ref{cor:tau} says $\{ \EL_n(\cS) : \cS\}$ forms an expander family 
with respect to elementary generating subsets of fixed size. 
The novel point compared to the previously known case of 
the universal lattice (\cite{kn}) is that the generating subsets of the finite commutative 
rings $\cS$ need not contain the unit although $\cS$ are assumed unital. 
For example, for $n$ large enough, the Cayley graphs of 
$\SL_n(\IZ/q\IZ)$ w.r.t.\ the generating subsets $\{ e_{i,j}(p) : i\neq j\}$ 
form an expander family as relatively prime pairs $(p,q)$ vary. 
The study of the expander property for $\SL_n(\IZ/q\IZ)$ and alike is 
a very active area. See \cite{bl,helfgott,kowalski} for recent surveys on this.

\subsection*{Acknowledgments}
The author is grateful to Professor Marek Kaluba for communications 
around the material of Section~\ref{sec:realgrpalg} and to 
Professor Nikhil Srivastava 
on the almost Mathieu operators. 
The author was partially supported by JSPS KAKENHI Grant No.\ 20H01806. 
\section{Preliminary}\label{sec:prelim}

Let $\Ga=\langle S\rangle$ be a group together with a 
finite symmetric generating subset $S$. 
We denote by $\IR[\Ga]$ the real group algebra 
with the involution $\ast$ which is the linear 
extension of $x^* := x^{-1}$ on $\Ga$. 
The identity element of $\Ga$ as well as $\IR[\Ga]$ is simply denoted by $1$. 
Recall the positive cone of \emph{sums of (hermitian) squares} is given by 
\[
\Sigma^2\IR[\Ga] := \{ \sum_i \xi_i^*\xi_i : \xi_i\in\IR[\Ga]\}
 \subset\IR[\Ga]^\her := \{ \xi\in\IR[\Ga] : \xi=\xi^*\}.
\]
The elements in $\Sigma^2\IR[\Ga]$ are considered positive.
For $\xi,\eta\in\IR[\Ga]^\her$, we write $\xi\preceq \eta$ 
if $\eta-\xi \in \Sigma^2\IR[\Ga]$.
It is obvious that $\xi\succeq0$ implies $\xi\geq0$ in the full group {\ca} $\cst[\Ga]$, 
that is to say, $\pi(\xi)$ is positive selfadjoint for every orthogonal (or unitary)
representation $\pi$ of $\Ga$ on a real (or complex) Hilbert space $\cH$. 
The converse is true up to the \emph{archimedean closure}: 
\[
\overline{\Sigma^2\IR[\Ga]}
:=\{ \xi\in \IR[\Ga] : \forall \ve>0\ \xi+\ve\cdot 1\succeq0\}
=\{ \xi\in\IR[\Ga] : \xi\geq0 \mbox{ in } \cst[\Ga]\}. 
\]
See, e.g., \cite{cimpric,cec,schmudgen} for this. 
On this occasion, we remind the basic fact that 
$0\preceq\xi\preceq\eta$ (or $0\le\xi\le\eta$) 
need not imply $0\le\xi^2\le\eta^2$. 
Note that since any orthogonal representation of $\Ga$ dilates 
to an orthogonal representation of any supergroup $\Ga_1\geq\Ga$ 
by induction (i.e., $\cst[\Ga]\subset\cst[\Ga_1]$ in short), 
whether $\xi\geq0$ or not does not depend on the ambient group. 
The same holds true for $\xi\succeq0$, by the coset decomposition.  
The \emph{combinatorial Laplacian}, w.r.t.\ the (symmetric) generating subset $S$,
\[
\Delta:=\frac{1}{2}\sum_{s\in S} (1-s)^*(1-s)=|S|-\sum_{s\in S} s 
\]
satisfies, for every orthogonal representation $(\pi,\cH)$ and a vector $v\in\cH$, 
\[
\ip{\pi(\Delta)v,v} = \frac{1}{2}\sum_{s\in S}\| v-\pi(s)v\|^2.
\]
%
%
\section{Proof of Main Theorem, Prelude}\label{sec:prelude}

For any rng $\cR$, we denote by $\EL_n(\cR)\subset\SL_n(\cR^1)$ 
the group generated by the elementary matrices over the rng $\cR$. 
The elementary matrices are those $e_{i,j}(r)\in \SL_n(\cR^1)$ 
with $1$'s on the diagonal, $r\in \cR$ in the $(i,j)$-th entry ($i\neq j$), 
and zeros everywhere else. 
They satisfy the Steinberg relations:
\begin{itemize}
\item $e_{i,j}(r) e_{i,j}(s)=e_{i,j}(r+s)$;
\item $[e_{i,j}(r),e_{j,k}(s)]=e_{i,k}(rs)$ if $i\neq k$;
\item $[e_{i,j}(r), e_{k,l}(s)]=1$ if $i\neq l$ and $j\neq k$.
\end{itemize}
We note that every rng homomorphism $\cR\to\cS$ 
induces by entrywise operation a group homomorphism 
$\EL_n(\cR)\to\EL_n(\cS)$ and that $\EL_n(\cR/\cR^k)$ 
is nilpotent for every $k$, where $\cR^k:=\lh\{ r_1\cdots r_k :  r_i\in \cR\}$.
To ease notation, we will write 
\[
E_{i,j}(r) := (1-e_{i,j}(r))^*(1-e_{i,j}(r)) = 2- e_{i,j}(r) - e_{i,j}(r)^* \in \IR[\EL_n(\cR)].
\]

We now consider the case $\cR=\IZ\langle t_1,\ldots,t_d \rangle$ and 
start proving Main Theorem. 
Recall that the combinatorial Laplacians w.r.t.\ the 
generating subset $\{ e_{i,j}(\pm t_r)\}$ are given by 
\[
\Delta_n := \sum_{i\neq j}\sum_{r=1}^d E_{i,j}(t_r) 
\ \mbox{ and }\ 
\Delta_n^{(2)} := \sum_{i\neq j} \sum_{r,s=1}^d  E_{i,j}(t_rt_s).
\]
We follow the idea of \cite{kkn} about the stability w.r.t.\ $n$ 
of the relation like $\Delta^{(2)}_n \ll \Delta^2_n$.  
Here $\xi\ll\eta$ means that $\xi\le R\eta$ for some $R>0$ in the full group {\ca}. 
For each $n$, put $\rE_n:=\{ \{ i, j\} : 1\le i,j\le n,\ i\neq j\}$ 
and, for $\re,\rf\in\rE_n$, write $\re \sim \rf$ if $|\re \cap \rf|=1$ and 
$\re \perp \rf$ if $\re \cap \rf=\emptyset$.
One has 
\[
\Delta_n = \sum_{\re \in \rE_n} \Delta_\re,
\]
where $\Delta_{\{i,j\}}:=\sum_{r=1}^d E_{i,j}(t_r) + E_{j,i}(t_r)$.
Thus
\begin{alignat*}{3}
\Delta_n^2 
&=  \sum_{\re} \Delta_{\re}^2 & + &\sum_{\re \sim \rf} \Delta_\re\Delta_\rf&
  + &\sum_{\re \perp \rf} \Delta_\re\Delta_\rf\\
&=: \Sq_n &+ &\Adj_n& + &\Op_n.
\end{alignat*}
The elements $\Sq_n$ and $\Op_n$ are positive, 
while $\Adj_n$ is not and this causes trouble. 

For $m<n$, we view $\EL_m(\cR)$ as a subgroup 
of $\EL_n(\cR)$ sitting at the left upper corner. 
The symmetric group $\Sym(n)$ acts on $\EL_n(\cR)$ by permutation of the indices. 
We note that $|\rE_m|=\frac{1}{2}m(m-1)$, 
$|\{(\re,\rf) \in \rE_m^2 : \re\sim\rf\}|=m(m-1)(m-2)$, 
and $|\{(\re,\rf) \in \rE_m^2 : \re\perp\rf\}|=\frac{1}{4}m(m-1)(m-2)(m-3)$. 
Hence, as it is proved in \cite{kkn}, one has 
\begin{align*}
\sum_{\sigma\in \Sym(n)}\sigma(\Delta^{(2)}_m) &= m(m-1)\cdot(n-2)! \cdot\Delta^{(2)}_n,\\
\sum_{\sigma\in \Sym(n)}\sigma(\Adj_m) &= m(m-1)(m-2)\cdot(n-3)! \cdot\Adj_n,\\
\sum_{\sigma\in \Sym(n)}\sigma(\Op_m) &= m(m-1)(m-2)(m-3)\cdot(n-4)!\cdot\Op_n.
\end{align*}
Thus if we know there are $m\in\IN$, $R>0$, and $\ve>0$ such that 
\[\tag{$\heartsuit$}
\Adj_m + R \Op_m \geq \ve\Delta^{(2)}_m 
\]
holds true in $\cst[\EL_m(\cR)]$, then it follows 
\[
\frac{n-2}{m-2} \ve \Delta^{(2)}_n \le \Adj_n+\frac{m-3}{n-3}R\Op_n\le\Delta_n^2
\]
for all $n$ such that $\frac{m-3}{n-3}R\le 1$ and Main Theorem is proved. 
This is Proposition 4.1 in \cite{kkn}. 
To apply this machinery, we further expand $\Adj_m$: 
\begin{align*}
\Adj_m &= \sum_{r,s}\sum_{i,j,k;\ \mbox{\scriptsize distinct}}
  (E_{i,j}(t_r)+E_{j,i}(t_r))(E_{j,k}(t_s)+E_{k,j}(t_s))\\
 &=  \sum_{r,s} \sum_{i,j,k;\ \mbox{\scriptsize distinct}}
  \bigl( E_{i,j}(t_r)E_{j,k}(t_s) + E_{j,k}(t_s)E_{i,j}(t_r) \\
  &\hspace*{100pt} + E_{i,j}(t_r)E_{i,k}(t_s) + E_{j,k}(t_s)E_{i,k}(t_r) \bigr).
\end{align*}
Therefore, if there are $m\in\IN$, $R>0$, $\ve>0$, 
and distinct indices $i,j,k,l$ such that 
\begin{align*}\tag{$\spadesuit$}
& E_{i,j}(t_r)E_{j,k}(t_s) + E_{j,k}(t_s)E_{i,j}(t_r) \\
&\quad + E_{i,j}(t_r)E_{i,l}(t_s) + E_{j,k}(t_s)E_{l,k}(t_r) 
 + R \Op_m \geq \ve E_{i,k}(t_rt_s)
\end{align*}
holds true, then we obtain $(\heartsuit)$ (for different $R>0$ and $\ve>0$) 
by summing up this over the $\Sym(m)$-orbit and over $r,s$.
This is what we will prove in the next section. 

\section{The Heisenberg group and the rotation $\mbox{C}^*$-algebras}\label{sec:heisenberg}

In this section, we will work entirely in the {\ca} setting. 
Let's consider the \emph{integral Heisenberg group} 
\begin{align*}
\bH 
:=\{ \left[\begin{smallmatrix} 1 & a & c \\ & 1 & b \\ & & 1\end{smallmatrix}\right] : a,b,c\in\IZ\}
\cong \langle x,y : z:=[x,y]\mbox{ is central}\rangle,
\end{align*}
where 
\[
x = \left[\begin{smallmatrix} 1 & 1 &  \\ & 1 &  \\ & & 1\end{smallmatrix}\right],\,
y = \left[\begin{smallmatrix} 1 &  &  \\ & 1 & 1 \\ & & 1\end{smallmatrix}\right],\,
z = \left[\begin{smallmatrix} 1 &  & 1 \\ & 1 &  \\ & & 1\end{smallmatrix}\right].
\]
Note that every irreducible unitary representation of $\bH$ sends 
the central element $z$ to a scalar (multiplication operator) of modulus $1$. 
For $\theta\in[0,1)$, we consider the irreducible 
unitary representation $\pi_\theta$ of $\bH$ 
on $\ell_2(\IZ)$ or $\ell_2(\IZ/q\IZ)$, depending on whether $\theta$ irrational 
or $\theta=p/q$ is rational with $\gcd(p,q)=1$, given by 
\[
\pi_\theta(x)\delta_j=\exp(2j\pi \imath\theta)\delta_j,\, 
\pi_\theta(y)\delta_j=\delta_{j+1},\,
\pi_\theta(z)=\exp(2\pi \imath\theta).
\]
By convention, if $\theta=p/q$ is rational, then $\gcd(p,q)=1$ is assumed; 
and if $\theta$ is irrational, we consider $q=\infty$ and $\IZ/q\IZ$ means $\IZ$. 
Thus in either case $\pi_\theta$ is a representation on $\ell_2(\IZ/q\IZ)$. 
The {\ca} $\cA_\theta:=\pi_\theta(\cst[\bH])$ 
is called the \emph{rotation {\ca}}. 

We fix the notations that are used throughout this section. 
We denote by 
\[
X:=(1-x)^*(1-x)=2-x-x^*\in \cst[\bH]_+,\; 
X_\theta:=\pi_\theta(X)\in\cA_\theta, 
\]
and the same for $y$ and $z$. 
Note that $X+Y$ is the combinatorial Laplacian of $\bH$ w.r.t.\ 
the generating subset $\{ x^{\pm},y^{\pm}\}$, that $0\le X\le 4$, 
and that the triplets $(X_\theta,Y_\theta,Z_\theta)$, $(Y_\theta,X_\theta,Z_\theta)$, 
and $(X_{1-\theta},Y_{1-\theta},Z_{1-\theta})$ are unitarily equivalent. 
For a parameter $\lambda>0$, 
the \emph{almost Mathieu operator} on $\ell_2(\IZ/q\IZ)$ is given by 
\[
H_{\theta,\lambda}
 :=\pi_\theta(\frac{\lambda}{2}(x+x^*)+y+y^*)
 =(\lambda+2)-(\frac{\lambda}{2}X_\theta+Y_\theta).
\]
We also write $s=\sin\pi\theta$, $s_m=\sin2m\pi\theta$, and $c_m=\cos2m\pi\theta$.
In particular, 
\[
Z_\theta=2(1-\cos2\pi\theta)=4s^2. 
\]
See \cite{boca} for more information about the almost Mathieu operators 
and \cite{nitsche} for some discussion in connection with the semidefinite programming. 

Eventually, we will prove a certain inequality (Theorem~\ref{thm:formula})
about $X$, $Y$, and $Z$ (in the full group {\ca} 
of a higher-dimensional Heisenberg group) 
that leads to $(\spadesuit)$ in the previous section. 
To prove inequalities about $X$, $Y$, and $Z$, it suffices 
to work with $X_\theta$, $Y_\theta$, and $Z_\theta$ 
for each $\theta\in[0,1/2]$ separately, 
thanks to the following well-known fact (Lemma~\ref{lem:faithful}). 
The critical estimate is the one for small $\theta>0$ 
(Corollary~\ref{cor:zzz} and Lemma~\ref{lem:smalltheta}). 
The rest will work out anyway. 

\begin{lem}\label{lem:faithful}
For any dense subset $I\subset[0,1)$, the representation $\bigoplus_{\theta\in I}\pi_\theta$ 
is faithful on the full group {\ca} $\cst[\bH]$.
\end{lem}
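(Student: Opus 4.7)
My plan is to use amenability of $\bH$ (nilpotent, hence amenable), so that $\cst[\bH]=\cst_r[\bH]$ and faithfulness of $\bigoplus_{\theta\in I}\pi_\theta$ can be tested against the left regular representation $\lambda$ on $\ell_2(\bH)$. Fourier-transforming along the central $\langle z\rangle\cong\IZ$, one gets $\ell_2(\bH)\cong L^2([0,1),\ell_2(\IZ^2))$ and a corresponding direct-integral decomposition $\lambda\cong\int^{\oplus}_{[0,1)}\sigma_\theta\,d\theta$ of representations of $\bH$ with central character $e^{2\pi i\theta}$. By the Stone--von Neumann theorem, for every irrational $\theta$ the representation $\sigma_\theta$ is quasi-equivalent to the irreducible $\pi_\theta$, so $\ker\sigma_\theta=\ker\pi_\theta$; since the irrationals carry full Lebesgue measure on $[0,1)$, this yields $\|\xi\|_{\cst[\bH]}=\|\lambda(\xi)\|=\mathop{\mathrm{ess\,sup}}_{\theta}\|\pi_\theta(\xi)\|$.

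It then remains to replace the essential supremum over almost every $\theta$ by the supremum over the dense set $I$. For this I would realize all $\pi_\theta$ uniformly on the common Hilbert space $\ell_2(\IZ)$ via the formulas $\tilde\pi_\theta(x^ay^bz^c)\delta_j=\exp(2\pi i\theta(c+(j+b)a))\delta_{j+b}$, which define a unitary representation of $\bH$ for every $\theta\in[0,1)$; this coincides with $\pi_\theta$ for irrational $\theta$ and has $\pi_{p/q}$ appearing as one of the $q$-dimensional fibers under the spectral decomposition of $\tilde\pi_\theta(y^q)$ for $\theta=p/q$. The explicit formulas make $\theta\mapsto\tilde\pi_\theta(\xi)$ strong-operator continuous for $\xi\in\IC[\bH]$, and the uniform bound $\|\tilde\pi_\theta(\xi)\|\le\|\xi\|_{\cst[\bH]}$ together with density of $\IC[\bH]$ extends this to all $\xi\in\cst[\bH]$. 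Consequently $\theta\mapsto\|\tilde\pi_\theta(\xi)\|$ is lower semicontinuous, and $\sup_{\theta\in I}\|\tilde\pi_\theta(\xi)\|=\sup_{\theta\in[0,1)}\|\tilde\pi_\theta(\xi)\|=\|\xi\|_{\cst[\bH]}$ for any dense $I$.

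The main obstacle is the rational case: for $\theta=p/q\in I$ one may have $\|\pi_{p/q}(\xi)\|<\|\tilde\pi_{p/q}(\xi)\|$, since $\tilde\pi_{p/q}$ also sees the whole torus of inequivalent twisted $q$-dimensional irreducibles at central character $e^{2\pi ip/q}$. To close the argument I would invoke Rieffel's theorem that the family of universal rotation $\cst$-algebras forms a continuous field over $[0,1)$, so that $\theta\mapsto\|\pi_\theta(\xi)\|$ is in fact continuous; combined with the density of $I$ this upgrades the previous display to $\sup_{\theta\in I}\|\pi_\theta(\xi)\|=\|\xi\|_{\cst[\bH]}$, so that $\bigoplus_{\theta\in I}\pi_\theta$ is isometric and thus faithful. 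An alternative, more hands-on route is to approximate: for rationals $p_n/q_n\to\theta_0$ with $q_n\to\infty$, the finite-dimensional operator $\pi_{p_n/q_n}(\xi)$ is norm-asymptotic to the $q_n$-window truncation of $\pi_{\theta_0}(\xi)$ on $\ell_2(\IZ)$, so $\|\pi_{p_n/q_n}(\xi)\|\to\|\pi_{\theta_0}(\xi)\|$ and density of $I$ suffices.
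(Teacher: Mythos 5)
Your route is genuinely different from the paper's: the paper never compares fibrewise norms, but instead integrates the tracial states $\tau_\theta$ associated with $\pi_\theta$, uses continuity of $\theta\mapsto\tau_\theta$ at irrational points (plus density of $I$) to see that $\tau:=\int_0^1\tau_\theta\,d\theta$ factors through the image of $\bigoplus_{\theta\in I}\pi_\theta$, identifies $\tau$ with the canonical trace of the regular representation, and concludes from faithfulness of that trace on $\cst[\bH]$ (amenability). Your norm-based plan can be made to work, but two of your justifications are incorrect as written.

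First, the fibre $\sigma_\theta$ of the regular representation is \emph{not} quasi-equivalent to $\pi_\theta$, and Stone--von Neumann is not available here: for irrational $\theta$ the fibre of the regular representation generates a $\mathrm{II}_1$ factor while $\pi_\theta$ generates $\IB(\ell_2(\IZ))$, and uniqueness up to quasi-equivalence fails for the irrational rotation relations. What you actually need, and what is true, is that $\cA_\theta$ is simple for irrational $\theta$, so $\sigma_\theta$ and $\pi_\theta$ have the same kernel, namely that of $\cst[\bH]\to\cA_\theta$; this yields $\|\sigma_\theta(\xi)\|=\|\pi_\theta(\xi)\|$ for almost every $\theta$ and hence $\|\xi\|_{\cst[\bH]}=\operatorname{ess\,sup}_\theta\|\pi_\theta(\xi)\|$. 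Second, and more seriously, the continuous-field theorem does not give continuity of $\theta\mapsto\|\pi_\theta(\xi)\|$: that theorem concerns the universal fibres, whereas for $\theta=p/q$ your $\pi_{p/q}$ is a single $q$-dimensional irreducible, far from faithful on the universal rational rotation algebra, so its norm can be strictly smaller than the fibre norm. In fact $\theta\mapsto\|\pi_\theta(\xi)\|$ is discontinuous at rationals (e.g.\ $\|\pi_0(1-x)\|=0$ while $\|\pi_\theta(1-x)\|=2$ for irrational $\theta$), so this ``fix'' fails exactly in the critical case where $I$ consists of rationals. Your hands-on alternative, however, supplies what is needed: for irrational $\theta_0$, $\xi\in\IR[\bH]$, and $\theta_n\in I$ with $\theta_n\to\theta_0$ (rational approximants automatically have $q_n\to\infty$), compressing to a fixed finite window and comparing matrix entries gives $\liminf_n\|\pi_{\theta_n}(\xi)\|\geq\|\pi_{\theta_0}(\xi)\|$; note that only this lower estimate is needed, not the full convergence you assert, whose upper half is again of continuous-field strength. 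Combined with the essential-supremum identity this gives $\sup_{\theta\in I}\|\pi_\theta(\xi)\|=\|\xi\|_{\cst[\bH]}$ on the dense $*$-subalgebra, hence isometry and faithfulness. With these two repairs your argument is complete, at the price of invoking simplicity of the irrational rotation algebras, which the paper's trace argument avoids.
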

\begin{proof}
For the readers' convenience, we sketch the proof. 
Let $\tau_\theta$ denote the tracial state on $\cst[\bH]$ associated with $\pi_\theta$. 
That is to say, if $\theta$ is irrational, then $\tau_\theta$ 
arises from the canonical tracial state on the irrational rotation {\ca} $\cA_\theta$ 
and it is given by $\tau_\theta(x^iy^j)=0$ for all $(i,j)\neq(0,0)$. 
If $\theta=p/q$ is rational, then $\tau_\theta$ is given by $\tr_q\circ\pi_\theta$, where 
$\tr_q$ is the tracial state on $\IM_q(\IC)$, and it satisfies $\tau_\theta(x^iy^j)=0$ 
for all $(i,j)\neq(0,0)$ in $(\IZ/q\IZ)^2$.
It follows that $\theta\mapsto\tau_\theta$ is continuous at irrational points 
and the assumption of the lemma implies that $\tau:=\int_0^1\tau_\theta\,d\theta$ 
is a continuous state on $\bigoplus_{\theta\in I}\pi_\theta$.
It is not hard to see that $\tau$ coincides with the tracial state associated with the 
left regular representation of $\bH$, that is to say, $\tau(x^iy^jz^k)=0$ 
for all $(i,j,k)\neq(0,0,0)$. 
Since $\bH$ is amenable, the tracial state $\tau$ is faithful 
on the full group {\ca} $\cst[\bH]$. 
\end{proof}

\begin{thm}[Boca \& Zaharescu \cite{bz}]\label{thm:bz2}
Let $\theta\in[0,1/2)$. One has 
\[
\| H_{\theta,\lambda}\| \le \lambda+2 -\frac{2\lambda}{\lambda+2}\sin\pi\theta.
\]
More precisely, for any real unit vector $\xi$ in $\ell_2(\IZ/q\IZ)$, 
\begin{align*}
\| H_{\lambda,\theta}\xi \|^2
 = \lambda^2 & + 4 
   + 2(1-\tan\pi\theta)\ip{\frac{\lambda}{2}\pi_\theta(x+x^*)\xi, \pi_\theta(y+y^*)\xi}\\
   & -\sum_m |\xi_{m-1}-\xi_{m+1}-\lambda s_m\xi_m|^2. 
\end{align*}
\end{thm}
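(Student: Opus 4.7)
The plan is to prove the precise identity first; the operator-norm bound then falls out by elementary estimates, and since $H_{\theta,\lambda}$ has a real matrix in the standard basis $\{\delta_j\}$, it suffices to test against real unit vectors $\xi$.

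Set $A:=\frac{\lambda}{2}\pi_\theta(x+x^*)$ (diagonal with entries $\lambda c_m$) and $B:=\pi_\theta(y+y^*)$ (the symmetric bilateral shift), so that $H_{\theta,\lambda}=A+B$ and $(H_{\theta,\lambda}\xi)_m=\lambda c_m\xi_m + \xi_{m-1}+\xi_{m+1}$. Expanding $\|H_{\theta,\lambda}\xi\|^2=\|A\xi\|^2+\|B\xi\|^2+2\langle A\xi,B\xi\rangle$, one uses $c_m^2+s_m^2=1$ to rewrite $\|A\xi\|^2=\lambda^2-\lambda^2\sum_m s_m^2\xi_m^2$, and the parallelogram identity $(\xi_{m-1}+\xi_{m+1})^2+(\xi_{m-1}-\xi_{m+1})^2 = 2\xi_{m-1}^2+2\xi_{m+1}^2$ to rewrite $\|B\xi\|^2=4-\sum_m(\xi_{m-1}-\xi_{m+1})^2$. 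The two negative contributions conspire with an auxiliary cross term $-2\lambda\sum_m s_m\xi_m(\xi_{m-1}-\xi_{m+1})$ to form a perfect square $-\sum_m|\xi_{m-1}-\xi_{m+1}-\lambda s_m\xi_m|^2$, and what remains is to convert that auxiliary cross term into $\langle A\xi,B\xi\rangle$. This conversion is the heart of the calculation: a discrete summation by parts keyed to the product-to-sum identities $s_{m+1}-s_m = 2\cos((2m+1)\pi\theta)\sin\pi\theta$ and $c_m+c_{m+1} = 2\cos((2m+1)\pi\theta)\cos\pi\theta$, which together give $s_{m+1}-s_m = \tan\pi\theta\,(c_m+c_{m+1})$ and hence
\[
\sum_m s_m\xi_m(\xi_{m-1}-\xi_{m+1}) = \tan\pi\theta\sum_m c_m\xi_m(\xi_{m-1}+\xi_{m+1}) = \frac{\tan\pi\theta}{\lambda}\langle A\xi,B\xi\rangle.
\]
Assembling the pieces produces the stated identity, with coefficient $(1-\tan\pi\theta)$ on $\langle A\xi,B\xi\rangle$.

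With the identity in hand, the norm bound follows by dropping the nonnegative sum-of-squares remainder and applying Cauchy--Schwarz $|\langle A\xi,B\xi\rangle|\le\|A\xi\|\,\|B\xi\|\le 2\lambda$. For $\theta\in[0,1/4]$ we have $1-\tan\pi\theta\ge 0$, so $\|H_{\theta,\lambda}\xi\|^2\le(\lambda+2)^2-4\lambda\tan\pi\theta\le(\lambda+2)^2-4\lambda\sin\pi\theta$, and the elementary inequality $\sqrt{a^2-b}\le a-b/(2a)$ with $a=\lambda+2$ finishes the proof in this range. The principal obstacle is the regime $\theta\in(1/4,1/2)$, where $1-\tan\pi\theta<0$ and naive Cauchy--Schwarz goes the wrong way; one has to absorb the negative-coefficient cross term carefully against the sum-of-squares remainder, or invoke a supplementary symmetry between $\theta$ and $\tfrac12-\theta$ that reduces the large-$\theta$ range to the already-treated small-$\theta$ range. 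Once that delicate estimate is in place, the rest is routine bookkeeping.
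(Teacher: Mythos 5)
Your derivation of the exact identity is correct and is essentially the paper's own computation: expand $\|(A+B)\xi\|^2$, complete the square to produce $-\sum_m|\xi_{m-1}-\xi_{m+1}-\lambda s_m\xi_m|^2$, and convert the auxiliary cross sum by summation by parts using $s_{m}-s_{m-1}=\tan\pi\theta\,(c_{m-1}+c_m)$. That part stands.

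The norm estimate, however, is only established on $[0,1/4]$, and the remaining range $\theta\in(1/4,1/2)$ is precisely where the content lies (the paper needs the bound for all $\theta\in[0,1/2]$, e.g.\ in Corollary~\ref{cor:XYZ1} with $\lambda=2$). Of your two suggested fixes, the symmetry $\theta\leftrightarrow\tfrac12-\theta$ is a dead end: the pair (cosine diagonal, symmetric shift) at $\tfrac12-\theta$ is not unitarily equivalent to the pair at $\theta$ (the only equivalences recorded and available are $\theta\leftrightarrow1-\theta$ and swapping $x,y$; indeed $\|H_{0,2}\|=4$ while $\|H_{1/2,2}\|=\sqrt8$), and the target bound is not symmetric under $\theta\mapsto\tfrac12-\theta$ — it is \emph{stronger} at $\tfrac12-\theta$ when $\theta<\tfrac14$ — so no such reduction could transfer the estimate. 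The missing idea, which is what the paper does, is not to separate $2\langle A\xi,B\xi\rangle$ from the auxiliary term and Cauchy--Schwarz each, but to perform the summation by parts on \emph{both} cross sums and combine them into the single expression $4\lambda(\cos\pi\theta-\sin\pi\theta)\sum_m\xi_{m-1}\xi_m\cos(2m-1)\pi\theta$, whose coefficient $\cos\pi\theta-\sin\pi\theta$ stays bounded as $\theta\to\tfrac12$ (unlike your $1-\tan\pi\theta$, which blows up), and then to use $\bigl|\sum_m\xi_{m-1}\xi_m\cos(2m-1)\pi\theta\bigr|\le\sum_m|\xi_{m-1}\xi_m|\le1$. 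This carries the argument well past $\theta=\tfrac14$; for $\theta$ very close to $\tfrac12$ the sign of that sum still has to be played off against the dropped square term (the paper is terse here and defers to Boca--Zaharescu), which is exactly the ``absorb against the remainder'' step you name but do not carry out. As submitted, then, the proposal does not prove the stated operator-norm bound on $(1/4,1/2)$.
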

\begin{proof}
Because the statements are formulated in a different way in \cite{bz}, 
we replicate here the proof from \cite{bz}: 
\begin{align*}
\| H_{\lambda,\theta}\xi \|^2
 &= \sum_m|\lambda c_m\xi_m + \xi_{m-1}+\xi_{m+1}|^2\\
 &= \lambda^2 + 4 +\sum_m\bigl( -\lambda^2s_m^2\xi_m^2-|\xi_{m-1}-\xi_{m+1}|^2
          +2\lambda c_m \xi_m(\xi_{m-1}+\xi_{m+1})\bigr)\\
 &=\lambda^2 + 4 -\sum_m |\xi_{m-1}-\xi_{m+1}-\lambda s_m\xi_m|^2\\
 & \qquad -2\lambda \sum_ms_m(\xi_{m-1}-\xi_{m+1})\xi_m +2\lambda \sum_m c_m \xi_m(\xi_{m-1}+\xi_{m+1}).  
\end{align*}
We continue computation:
\begin{align*}
\sum_m c_m\xi_m(\xi_{m-1}+\xi_{m+1}) 
 &=\sum_m (c_{m-1}+c_m) \xi_{m-1}\xi_m \\
 &=2\cos\pi\theta \sum_m  \xi_{m-1}\xi_m\cos(2m-1)\pi\theta
\intertext{and similarly}
 -\sum_ms_m(\xi_{m-1}-\xi_{m+1})\xi_m
 &=\sum_m (s_{m-1}-s_m) \xi_{m-1}\xi_m \\
 &=-2\sin\pi\theta \sum_m \xi_{m-1}\xi_m\cos(2m-1)\pi\theta\\
 &=-\tan\theta\sum_m c_m\xi_m(\xi_{m-1}+\xi_{m+1}). 
\end{align*}
Thus one obtains the purported formula for $\| H_{\lambda,\theta}\xi \|^2$. 
We also observe that 
\begin{align*}
\| H_{\lambda,\theta}\xi \|^2
 &\le \lambda^2 + 4  + 4\lambda(\cos\pi\theta-\sin\pi\theta)\sum_m\xi_{m-1}\xi_m\cos(2m-1)\pi\theta\\
 &\le  \lambda^2 + 4  + 4\lambda(1-\sin\pi\theta).
\end{align*}
This yields the purported estimate for $\|H_{\theta,\lambda}\|$. 
\end{proof}

\begin{cor}\label{cor:XYZ1}
In the full group {\ca} $\cst[\bH]$, one has 
\[
X+Y\geq\frac{1}{2}\sqrt{Z}.
\]
\end{cor}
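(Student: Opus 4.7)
The plan is to reduce the operator inequality in $\cst[\bH]$ to a family of pointwise verifications via the irreducible representations $\pi_\theta$, and then to read off the needed estimate directly from Theorem~\ref{thm:bz2} by choosing the parameter $\lambda$ optimally.

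First I would invoke Lemma~\ref{lem:faithful} to reduce the problem to showing $X_\theta + Y_\theta \geq \frac{1}{2}\sqrt{Z_\theta}$ for all $\theta$ in some dense subset of $[0,1)$. Since $Z_\theta = 4\sin^2\pi\theta$, one has $\sqrt{Z_\theta} = 2|\sin\pi\theta|$, so the desired inequality takes the concrete form $X_\theta + Y_\theta \ge |\sin\pi\theta|$. Using the unitary equivalence $(X_{1-\theta},Y_{1-\theta},Z_{1-\theta}) \cong (X_\theta,Y_\theta,Z_\theta)$ noted just before Lemma~\ref{lem:faithful}, one can further restrict attention to a dense subset of $[0,1/2)$, where $\sin\pi\theta \geq 0$ and the absolute value can be dropped.

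With that reduction in hand, I would apply Theorem~\ref{thm:bz2} at the single parameter value $\lambda = 2$. Since $H_{\theta,2}$ is selfadjoint, the norm bound $\|H_{\theta,2}\| \leq 4 - \sin\pi\theta$ supplied by that theorem implies in particular the operator inequality $H_{\theta,2} \leq 4 - \sin\pi\theta$. Combined with the identity $H_{\theta,2} = 4 - X_\theta - Y_\theta$ (which is just the definition $H_{\theta,\lambda} = (\lambda+2) - (\frac{\lambda}{2}X_\theta + Y_\theta)$ at $\lambda=2$), this rearranges to $X_\theta + Y_\theta \geq \sin\pi\theta = \frac{1}{2}\sqrt{Z_\theta}$, which is exactly the desired pointwise estimate.

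I do not foresee a serious obstacle here, since the deep analytic content has already been absorbed into Theorem~\ref{thm:bz2}. The only real design choice is the value of $\lambda$: taking $\lambda = 2$ is distinguished because it simultaneously equalizes the coefficients of $X_\theta$ and $Y_\theta$ in $H_{\theta,\lambda}$ and saturates the ratio $\frac{2\lambda}{\lambda+2}$ to $1$, producing a constant on the right-hand side that matches $\frac{1}{2}\sqrt{Z_\theta}$ exactly; any other $\lambda$ would give a strictly weaker constant. The subsequent corollaries in this section (the small-$\theta$ estimate and Lemma~\ref{lem:smalltheta}) will presumably require the finer formula in Theorem~\ref{thm:bz2}, but for Corollary~\ref{cor:XYZ1} itself the plain norm bound suffices.
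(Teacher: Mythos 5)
Your proposal is correct and is essentially the paper's own argument: reduce via Lemma~\ref{lem:faithful} (together with the $\theta\leftrightarrow 1-\theta$ equivalence) to each $\pi_\theta$ with $\theta\in[0,1/2]$, then apply Theorem~\ref{thm:bz2} at $\lambda=2$ to get $X_\theta+Y_\theta=4-H_{\theta,2}\geq\sin\pi\theta=\frac{1}{2}\sqrt{Z_\theta}$. No gaps.
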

\begin{proof}
By Lemma~\ref{lem:faithful}, it suffices to show the assertion in $\cA_\theta$ for each $\theta\in[0,1/2]$. 
This follows from Theorem~\ref{thm:bz2} with $\lambda=2$ that  
$X_\theta+Y_\theta=4-H_{\theta,2}\geq \frac{1}{2}\sqrt{Z_\theta}$.
\end{proof}

Since $Z$ is central, $X+Y\geq \frac{1}{2}\sqrt{Z}$ is equivalent 
to $4(X+Y)^2\geq Z$ in $\cst[\bH]$. 
However, there is no $R>0$ such that $R(X+Y)^2  \succeq Z$ in $\IR[\bH]$.
We will elaborate this in Section~\ref{sec:realgrpalg}.

\begin{cor}\label{cor}\label{cor:zzz}
Let $R\geq1$, $0<\kappa<1$, 
and $\theta_0:=\min\{\frac{1}{4},\frac{1}{\pi}\arcsin (\kappa\sqrt{\frac{1-\kappa}{R}})\}$.
Then for any $\theta\in[0,\theta_0]$, one has 
\[
R X_\theta + Y_\theta \geq \frac{\sqrt{(1-\kappa) R}}{2}\sqrt{Z_\theta}. 
\]
\end{cor}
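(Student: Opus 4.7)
The plan is to apply Lemma~\ref{lem:faithful} to reduce the problem to showing, in each $\cA_\theta$ for $\theta\in[0,\theta_0]$, that $\ip{(RX_\theta+Y_\theta)\xi,\xi}\geq \sqrt{(1-\kappa)R}\,\sin\pi\theta$ for every unit vector $\xi$. The defining inequality $\sin\pi\theta\leq\kappa\sqrt{(1-\kappa)/R}$ of $\theta_0$ immediately forces the ceiling $\sqrt{(1-\kappa)R}\sin\pi\theta\leq\kappa(1-\kappa)\leq 1/4$, which will trivialize one branch of the argument.

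The main estimate combines AM--GM with an uncertainty bound. Since $\ip{X_\theta\xi,\xi}=\|(1-x)\xi\|^2$ and similarly for $Y_\theta$, AM--GM gives $\ip{(RX_\theta+Y_\theta)\xi,\xi}\geq 2\sqrt{R}\,\|(1-x)\xi\|\|(1-y)\xi\|$. Because $x$ and $y$ act as unitaries in $\pi_\theta$, the operators $1-x$ and $1-y$ are normal, and for any normal $A,B$ one has $\|A\xi\|\|B\xi\|\geq\tfrac{1}{2}|\ip{[A,B]\xi,\xi}|$ (apply Cauchy--Schwarz to both terms of $\ip{AB\xi,\xi}-\ip{BA\xi,\xi}$, using $\|A^*\xi\|=\|A\xi\|$). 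Since $[1-x,1-y]=xy-yx=(z-1)yx$ in $\bH$ and $|\pi_\theta(z)-1|=2\sin\pi\theta$, one obtains $\|(1-x)\xi\|\|(1-y)\xi\|\geq\sin\pi\theta\,|\ip{yx\xi,\xi}|$. Combining,
\[
\ip{(RX_\theta+Y_\theta)\xi,\xi}\geq 2\sqrt{R}\,\sin\pi\theta\,|\ip{yx\xi,\xi}|.
\]

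Now perform a dichotomy on $|\ip{yx\xi,\xi}|$. If $|\ip{yx\xi,\xi}|\geq\sqrt{1-\kappa}/2$, the displayed inequality already yields the target. Otherwise $\|(yx-1)\xi\|^2=2-2\operatorname{Re}\ip{yx\xi,\xi}>2-\sqrt{1-\kappa}$, and the decomposition $yx-1=y(x-1)+(y-1)$, together with the triangle inequality (using that $y$ is unitary) and $(a+b)^2\leq 2(a^2+b^2)$, gives
\[
\|(1-x)\xi\|^2+\|(1-y)\xi\|^2\geq \tfrac{1}{2}\|(yx-1)\xi\|^2>1-\tfrac{\sqrt{1-\kappa}}{2}\geq \tfrac{1}{2}.
\]
Since $R\geq 1$, $\ip{(RX_\theta+Y_\theta)\xi,\xi}>1/2$, which exceeds the ceiling $\kappa(1-\kappa)\leq 1/4$ on the target.

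The main obstacle will be recognizing that the Boca--Zaharescu estimate used for Corollary~\ref{cor:XYZ1} does not extend in a direct way to give the $\sqrt{R}$ dependence required here; the correct $\sqrt{R}$-scaling instead emerges from AM--GM applied to the weighted sum $R\|(1-x)\xi\|^2+\|(1-y)\xi\|^2$, with the off-diagonal product $\|(1-x)\xi\|\|(1-y)\xi\|$ controlled through the Heisenberg commutator. The threshold $\sqrt{1-\kappa}/2$ in the dichotomy is precisely the one that balances the two branches; the remaining verifications (the commutator identity, normality of $1-x$ and $1-y$, and the numerical bound $1-\sqrt{1-\kappa}/2\geq 1/2$) are routine.
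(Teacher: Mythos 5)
Your proof is correct, and it takes a genuinely different route from the paper. The paper derives the corollary from the Boca--Zaharescu analysis of the almost Mathieu operator: it applies Theorem~\ref{thm:bz2} with the tuned parameter $\lambda=2R/C$, $C=\sqrt{(1-\kappa)R}$, and runs a dichotomy on $\ve:=1-\|c\xi\|$ where $c=1-\frac{1}{2}X_\theta$, squeezing the refined identity for $\|H_{\lambda,\theta}\xi\|^2$ to get the bound $\|H_{\lambda,\theta}\xi\|\le\lambda+2-Cs$. You instead bypass Theorem~\ref{thm:bz2} entirely: weighted AM--GM gives $\ip{(RX_\theta+Y_\theta)\xi,\xi}\geq 2\sqrt{R}\,\|(1-\pi_\theta(x))\xi\|\,\|(1-\pi_\theta(y))\xi\|$, and the uncertainty-type bound $2\|A\xi\|\|B\xi\|\geq|\ip{[A,B]\xi,\xi}|$ for the normal operators $A=1-\pi_\theta(x)$, $B=1-\pi_\theta(y)$, combined with $[1-x,1-y]=(z-1)yx$ and $|\pi_\theta(z)-1|=2\sin\pi\theta$, reduces everything to the Weyl commutation relation; your dichotomy on $|\ip{\pi_\theta(yx)\xi,\xi}|$ at threshold $\sqrt{1-\kappa}/2$ closes both branches, the second one because the target is capped by $\kappa(1-\kappa)\le 1/4<1/2$ by the very definition of $\theta_0$ (I checked the constants: branch one gives exactly $\sqrt{(1-\kappa)R}\,s=\frac{\sqrt{(1-\kappa)R}}{2}\sqrt{Z_\theta}$, and branch two gives $\ge 1-\frac{1}{2}\sqrt{1-\kappa}\ge\frac{1}{2}$ using $R\ge1$). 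Your approach is more elementary and self-contained, and in particular makes the corollary independent of the almost Mathieu machinery; the paper's route keeps all the $\theta$-estimates flowing through the single Boca--Zaharescu identity, which it needs anyway for Corollary~\ref{cor:XYZ1}. Two harmless remarks: the appeal to Lemma~\ref{lem:faithful} is unnecessary here since the statement is already about $X_\theta,Y_\theta,Z_\theta$ in $\cA_\theta$ (a vector estimate on $\ell_2(\IZ/q\IZ)$ suffices), and your closing comment that the Boca--Zaharescu estimate ``does not extend in a direct way'' is not quite right---the paper does extract the $\sqrt{R}$-scaling from it---but this does not affect the validity of your argument.
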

\begin{proof}
We write $s_0:=\sin\pi\theta_0$, $c:=\diag_m c_m = \pi_\theta(\frac{x+x^*}{2}) = 1-\frac{1}{2}X_\theta$, 
and $C=\sqrt{(1-\kappa) R}$. 
Let $\theta \in [0,\theta_0]$ and a real unit vector $\xi\in\ell_2(\IZ/q\IZ)$ be given. 
We need to prove $\ip{(R X_\theta + Y_\theta)\xi,\xi} \geq Cs$. 
For this, we may assume that $\ip{\pi_\theta(x+x^*)\xi,\pi_\theta(y+y^*)\xi}>0$ 
because otherwise $\ip{(X_\theta+Y_\theta)\xi,\xi}\geq4-\|\pi_\theta(x+x^*+y+y^*)\xi\|\geq 4-2\sqrt{2}$. 
Put $\ve:=1-\| c\xi\|$. 
If $\ve\geq\frac{Cs}{2R}$, then 
$\ip{RX_\theta \xi,\xi}\geq 2R\ve\geq Cs$ and we are done. 
 From now on, we assume that $\ve<\frac{Cs}{2R}$. 
By Theorem~\ref{thm:bz2} for $\lambda:=2R/C$,
one has 
\begin{align*}
\| H_{\lambda,\theta}\xi \|^2 
 &\le \lambda^2+4+2\lambda(1-s)\ip{c\xi,(H_{\theta,\lambda}-\lambda c)\xi}\\
 &\le \lambda^2+4+2\lambda(1-s)(1-\ve)\|H_{\theta,\lambda}\xi\|-2\lambda^2(1-s)(1-\ve)^2
\end{align*}
and hence 
\begin{align*}
\bigl(\| H_{\lambda,\theta}\xi \| -\lambda(1-s)(1-\ve) \bigr)^2 
 &\le 4+\lambda^2(1-2(1-s)(1-\ve)^2+(1-s)^2(1-\ve)^2)\\
 &= 4 +\lambda^2(1-(1-s^2)(1-\ve)^2)\\
 &\le 4+ \lambda^2(s^2+2\ve). 
\end{align*}
Thus 
\[
\| H_{\lambda,\theta}\xi \| \le 2
 +\lambda s( \frac{1}{4}\lambda s_0 + \frac{1}{2}\lambda \frac{\ve}{s}) + \lambda(1-s).
\]
By our choices, $\lambda s_0 =\frac{2R}{C}\cdot\frac{\kappa\sqrt{(1-\kappa)}}{\sqrt{R}} 
=2\kappa$ and $\lambda\ve/s \le1$. 
Therefore, 
\[
\| H_{\lambda,\theta}\xi \| \le \lambda + 2
   - (1-\frac{1}{4}\cdot 2\kappa-\frac{1}{2})\cdot2\sqrt{\frac{R}{1-\kappa}}s
= \lambda + 2 - C s.
\]
Since $\lambda+2-H_{\lambda,\theta}=\frac{\lambda}{2}X_\theta+Y_\theta\le RX_\theta+Y_\theta$, we are done.
\end{proof}

\begin{prop}\label{prop:XYZ2}
In the full group {\ca} $\cst[\bH]$, one has 
\[
(X + Y)\sqrt{Z} + \frac{1}{2}(XY+YX)\geq0.
\]
\end{prop}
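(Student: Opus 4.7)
My plan is to invoke Lemma~\ref{lem:faithful} and reduce to checking the inequality in each rotation \ca{} $\cA_\theta$, $\theta \in [0,1/2]$. Since $Z$ is central with $Z_\theta = 4s^2$ for $s := \sin\pi\theta$, we have $\sqrt{Z_\theta} = 2s$, and the case $s = 0$ is trivial. For $s > 0$, using that $X_\theta = \diag_j a_j$ with $a_j := 4\sin^2(j\pi\theta)$ and that $Y_\theta$ is the discrete Laplacian on $\ell_2(\IZ/q\IZ)$, the operator
\[
M_\theta := 2s(X_\theta + Y_\theta) + \tfrac{1}{2}(X_\theta Y_\theta + Y_\theta X_\theta)
\]
is presented by a real symmetric tridiagonal (Jacobi-type) matrix, so its positivity on the complex Hilbert space reduces to its positivity on the real subspace.

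A direct computation of the matrix entries yields, for every real $\xi\in\ell_2(\IZ/q\IZ)$,
\[
\ip{M_\theta \xi, \xi} = \sum_j\bigl[\alpha_j \xi_j^2 - \beta_j \xi_j \xi_{j+1} + \gamma_j \xi_{j+1}^2\bigr],
\]
where $\alpha_j := (s+1)a_j + 2s$, $\gamma_j := (s+1)a_{j+1} + 2s$, and $\beta_j := 4s + a_j + a_{j+1}$. Since the $\xi_j^2$ coefficient $2((s+1)a_j+2s)$ equals $\alpha_j + \gamma_{j-1}$, this edgewise decomposition is exact. As $\alpha_j, \gamma_j \geq 0$ is obvious, the proof reduces to verifying the edgewise discriminant inequality $\beta_j^2 \leq 4\alpha_j \gamma_j$ for every $j$.

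This last step is where I expect all the difficulty to lie: the $a_j$'s are correlated in a subtle way, and the discriminant $4\alpha_j\gamma_j - \beta_j^2$ is not termwise nonnegative in the $a_j$'s. The cleanest route I see is to apply the product-to-sum identities
\[
a_j + a_{j+1} = 4(1 - cw), \qquad a_j a_{j+1} = 4(c - w)^2,
\]
where $c := \cos\pi\theta$ and $w := \cos((2j+1)\pi\theta)$, and then change variables via $t := w - c$. Under this substitution $1-cw = s^2 - ct$, so a short expansion gives
\[
4\alpha_j\gamma_j - \beta_j^2 = 16\bigl(2s(s+1)t^2 + s^4\bigr),
\]
which is manifestly nonnegative. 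Spotting the parameterization $t = w - c$, which makes a mysterious-looking quartic trigonometric inequality collapse into a sum of two evident squares, is the main (and essentially the only) obstacle.
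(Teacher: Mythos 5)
Your proposal is correct and takes essentially the same approach as the paper: after the reduction via Lemma~\ref{lem:faithful}, your edgewise quadratic-form decomposition is exactly the paper's splitting into the $2\times 2$ blocks $T_{\theta,m}$ (your $\alpha_j,\beta_j,\gamma_j$ match those blocks under $b_j=a_j/2$), and everything again reduces to the same discriminant inequality. The only divergence is the final algebraic step, where the paper discards the factor $(s+1)^2$ and bounds $(b_{m-1}-b_m)^2=4s^2\sin^2(2m-1)\pi\theta$ via a sine triangle inequality, whereas your substitution $t=w-c$ produces the identity $4\alpha_j\gamma_j-\beta_j^2=16\bigl(2s(s+1)t^2+s^4\bigr)$, which checks out.
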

\begin{proof}
By Lemma~\ref{lem:faithful}, it suffices to show the same for $X_\theta$'s.
We write $b_m:=1-c_m=1-\cos2m\pi\theta = 2\sin^2 m\pi\theta$. 
We observe that 
\[
X_\theta=\left[\begin{matrix}
\ddots & & & \\
& 2b_{m-1} & & \\ 
& & 2b_{m} & \\ 
& & & \ddots
\end{matrix}\right],\quad
Y_\theta=\left[\begin{matrix}
\ddots & & & \\
& 2 & -1 & \\ 
& -1 & 2 & \\ 
& & & \ddots
\end{matrix}\right],
\]
\[
\frac{1}{2}(X_\theta Y_\theta+Y_\theta X_\theta) = 
\left[\begin{matrix}
\ddots & & & \\
& 4b_{m-1} & -(b_{m-1}+b_{m}) & \\ 
& -(b_{m-1}+b_{m})  & 4b_{m} & \\ 
& & & \ddots
\end{matrix}\right].
\]
These are the sums of the following 2-by-2 matrices 
sitting at $(m-1)$-to-$m$-th corners:
\[
X_{\theta,m}=\left[\begin{matrix}
b_{m-1} &  \\ 
&  b_{m} 
\end{matrix}\right],\quad
Y_{\theta,m}=\left[\begin{matrix}
1 & -1 \\ 
-1 & 1 
\end{matrix}\right],
\]
\[
\frac{1}{2}(XY+YX)_{\theta,m}:=\left[\begin{matrix}
2b_{m-1} & -(b_{m-1}+b_{m})\\
 -(b_{m-1}+b_{m}) &2b_{m}
\end{matrix}\right].
\]
Thus, it suffices to show 
\begin{align*}
T_{\theta,m}
&:=2s(X_{\theta,m} + Y_{\theta,m})
+\frac{1}{2}(XY+YX)_{\theta,m} \\
&= \left[\begin{matrix}
2(s +1)b_{m-1} + 2s & -(2s+b_{m-1}+b_{m})\\
-(2s+b_{m-1}+b_{m}) &2(s+1)b_{m} + 2s 
\end{matrix}\right]
\end{align*}
is positive in $\IM_2(\IC)$ for every $m$. 
We only need to calculate the determinant:
\begin{align*}
\mathrm{det}(T_{\theta,m}) 
  &\geq 4b_{m-1} b_{m}+4s(s+1)(b_{m-1}+b_{m})+4s^2
   -(2s+b_{m-1}+b_m)^2\\
 &= 4s^2(b_{m-1}+b_m)-(b_{m-1}-b_m)^2\\
 &= 8s^2 (\sin^2 (m-1)\pi\theta  + \sin^2m\pi\theta)
   - 4s^2 \sin^2 (2m-1)\pi\theta\\
&\geq0.
\end{align*}
Here, we have used the formulas $b_m=2\sin^2 m\pi\theta$, 
$b_{m-1} - b_{m}=-2s\theta\sin(2m-1)\pi\theta$, 
and $|\sin(2m-1)\pi\theta| \le |\sin (m-1)\pi\theta| + |\sin m\pi\theta|$.
\end{proof}

A similar calculation shows $Z+\frac{1}{2}(XY+YX)\geq0$ in $\cst[\bH]$. 
In fact, it is a sum of squares: 
\[
Z+\frac{1}{2}(XY+YX) = \frac{1}{4}(X+Y)Z+
\frac{1}{8}\sum (1-b)^\delta(1-a)^{\epsilon}(1-a)^{\bar{\epsilon}}(1-b)^{\bar{\delta}},
\]
where $\sum$ is over the 8 terms $(a,b)\in\{(x,y),(y,x)\}$ and $(\epsilon,\bar{\epsilon}),(\delta,\bar{\delta})\in\{(\ast,\,\cdot\,),(\,\cdot\,,\ast)\}$.

Now, we consider the {\ca} $\cA_\theta\otimes\cA_\theta$ 
on $\ell_2(\IZ/q\IZ)\otimes\ell_2(\IZ/q\IZ)$. 
We continue to view $Z_\theta$ as a scalar in $\cA_\theta\otimes\cA_\theta$. 
We want to find an inequality that leads to $(\spadesuit)$. 
The following does the job for small $\theta>0$. We note that it fails at $\theta_0=1/2$. 

\begin{lem}\label{lem:smalltheta}
There are $\theta_0>0$ , $R>1$, and $\ve>0$  
such that for every $\theta\in[0,\theta_0]$, one has 
\[
 R(X_\theta\otimes Y_\theta+Y_\theta\otimes X_\theta)
+X_\theta\otimes X_\theta+Y_\theta\otimes Y_\theta
+(X_\theta Y_\theta + Y_\theta X_\theta)\otimes 1\geq  \ve Z_\theta.
\] 
\end{lem}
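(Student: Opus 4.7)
My plan is to exploit the factorization
\[
R(X_\theta \otimes Y_\theta + Y_\theta \otimes X_\theta) + X_\theta \otimes X_\theta + Y_\theta \otimes Y_\theta = (X_\theta + R Y_\theta) \otimes X_\theta + (R X_\theta + Y_\theta) \otimes Y_\theta,
\]
which expresses the first three terms of the LHS as a sum of tensor products of positive operators whose left factors are each amenable to the lower bound of Corollary~\ref{cor:zzz}.

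First I would fix $R \geq 1$ sufficiently large (say $R=100$) and $\kappa \in (0,1)$ (say $\kappa = 1/4$), and take $\theta_0 > 0$ as produced by Corollary~\ref{cor:zzz} for these parameters. Writing $s = \sin \pi\theta$, for every $\theta \in [0, \theta_0]$ both $RX_\theta + Y_\theta$ and (by the $X \leftrightarrow Y$ unitary equivalence) $X_\theta + R Y_\theta$ exceed the scalar $c \sqrt{Z_\theta} = 2cs$, where $c := \sqrt{(1-\kappa)R}/2$. Since this lower bound is scalar, it tensors with the positive operators $X_\theta, Y_\theta$ on the right to give
\[
(X_\theta + R Y_\theta) \otimes X_\theta + (R X_\theta + Y_\theta) \otimes Y_\theta \geq 2cs \cdot \bigl(1 \otimes (X_\theta + Y_\theta)\bigr).
\]
Applying Corollary~\ref{cor:XYZ1} to the second tensor factor, $X_\theta + Y_\theta \geq \tfrac{1}{2}\sqrt{Z_\theta} = s$, the right-hand side is in turn at least $2cs^2 = \tfrac{c}{2} Z_\theta$.

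For the last term $(X_\theta Y_\theta + Y_\theta X_\theta) \otimes 1$ of the LHS I would invoke the sum-of-squares identity recorded immediately after Proposition~\ref{prop:XYZ2}, namely $Z + \tfrac{1}{2}(XY+YX) \geq 0$ in $\cst[\bH]$, which yields $(X_\theta Y_\theta + Y_\theta X_\theta) \otimes 1 \geq -2 Z_\theta$. Combined with the estimate of the previous paragraph, the entire LHS is at least $(\tfrac{c}{2} - 2) Z_\theta$, a positive multiple of $Z_\theta$ provided $(1-\kappa) R > 64$; the choice $R = 100$, $\kappa = 1/4$ gives $c = \sqrt{75}/2 > 4$ and hence $\ve = c/2 - 2 > 0$.

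The hard part is discovering the factorization: once it is in hand, everything falls out mechanically from Corollary~\ref{cor:XYZ1}, Corollary~\ref{cor:zzz}, and the sum-of-squares bound following Proposition~\ref{prop:XYZ2}. A subtle point worth emphasizing is that the weaker estimate $(XY+YX) \geq -2(X+Y)\sqrt{Z}$ given directly by Proposition~\ref{prop:XYZ2} would \emph{not} suffice here, since its negative contribution is of order $\sqrt{Z_\theta} \sim \theta$ for small $\theta$, swamping the order-$Z_\theta \sim \theta^2$ positive gain from the factorization; only the sharper sum-of-squares bound $Z + \tfrac{1}{2}(XY+YX) \geq 0$ is compatible with the small-$\theta$ regime treated by the lemma.
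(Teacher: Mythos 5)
Your proof is correct, and its skeleton is the same as the paper's: regroup the first three terms as $(X_\theta+RY_\theta)\otimes X_\theta+(RX_\theta+Y_\theta)\otimes Y_\theta$, invoke Corollary~\ref{cor:zzz} (plus the $(X_\theta,Y_\theta)\leftrightarrow(Y_\theta,X_\theta)$ equivalence) for a scalar lower bound of order $\sqrt{R}\,s$, and finish with Corollary~\ref{cor:XYZ1}; the constants even match (both arguments need $(1-\kappa)R$ above $64$). The genuine divergence is the cross term. Because you place the scalar bound in the tensor leg opposite to $(X_\theta Y_\theta+Y_\theta X_\theta)\otimes1$, you must scalarize the positive gain right away via Corollary~\ref{cor:XYZ1}, and then the only usable control on the cross term is the scalar bound $X_\theta Y_\theta+Y_\theta X_\theta\geq-2Z_\theta$ from the sum-of-squares remark after Proposition~\ref{prop:XYZ2}. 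The paper instead bounds $1\otimes(RX_\theta+Y_\theta)$ and $1\otimes(X_\theta+RY_\theta)$ by $8s$, so the residual operator $8s\,(X_\theta+Y_\theta)\otimes1$ sits in the same leg as the cross term and can be cancelled against Proposition~\ref{prop:XYZ2} itself ($X_\theta Y_\theta+Y_\theta X_\theta\geq-4s(X_\theta+Y_\theta)$), with Corollary~\ref{cor:XYZ1} applied only at the very end. For this reason your closing remark is misleading: it is not true that only the bound $Z+\tfrac12(XY+YX)\geq0$ is compatible with the small-$\theta$ regime---the paper's own proof uses precisely the estimate of Proposition~\ref{prop:XYZ2}; what is true is that Proposition~\ref{prop:XYZ2} no longer suffices \emph{after} you have reduced the positive part to the scalar $\tfrac{c}{2}Z_\theta$, since $-4s(X_\theta+Y_\theta)$ is only of order $s$, not $s^2$. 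The necessity you assert is an artifact of the order in which you apply Corollary~\ref{cor:XYZ1}, not a feature of the lemma.
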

\begin{proof}
By Corollary~\ref{cor:zzz}, there are $\theta_0>0$ and $R>1$  
such that $1\otimes (R X_\theta + Y_\theta) \geq 8s$ for every $\theta\in[0,\theta_0]$.
By Proposition~\ref{prop:XYZ2} and Corollary~\ref{cor:XYZ1}, it follows that 
the left hand side dominates 
\[
(X_\theta+Y_\theta)\cdot 8s
   +X_\theta Y_\theta + Y_\theta X_\theta
 \geq  (X_\theta+Y_\theta)\cdot 4s
 \geq Z_\theta,
\]
where we omitted writing $\otimes1$.  
\end{proof}

To deal with the case $\theta\geq\theta_0$, 
we need a few more auxiliary lemmas on $\cA_\theta$. 
\begin{lem}\label{lem:prodnorm}
For every $\theta\in[0,1/2]$, one has 
\[
\|\pi_\theta((1-x)(1-y))\|\le 4\cos(\pi\theta/2).
\]
\end{lem}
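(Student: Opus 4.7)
My plan is to realize $W := \pi_\theta((1-x)(1-y))$ explicitly as an operator on $\ell_2(\IZ/q\IZ)$ and then bound its operator norm by a direct Hilbert-space calculation. From the definitions of $\pi_\theta(x)$ and $\pi_\theta(y)$, one computes
\[
(W\xi)_m = (1-e^{2\pi im\theta})(\xi_m - \xi_{m-1}),
\qquad\mbox{so}\qquad
\|W\xi\|^2 = 4\sum_m \sin^2(\pi m\theta)\,|\xi_m - \xi_{m-1}|^2.
\]

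The next step is to apply the elementary inequality $|\xi_m-\xi_{m-1}|^2 \le 2(|\xi_m|^2+|\xi_{m-1}|^2)$ and reindex the resulting $|\xi_{m-1}|^2$-sum, which yields
\[
\|W\xi\|^2 \le 8 \sum_m \bigl(\sin^2(\pi m\theta) + \sin^2(\pi(m+1)\theta)\bigr)\,|\xi_m|^2.
\]
The key input is then the identity $\sin^2 A + \sin^2 B = 1 - \cos(A+B)\cos(A-B)$ applied with $A=\pi m\theta$ and $B=\pi(m+1)\theta$, giving
\[
\sin^2(\pi m\theta) + \sin^2(\pi(m+1)\theta) = 1 - \cos(\pi\theta)\cos((2m+1)\pi\theta) \le 1 + \cos(\pi\theta) = 2\cos^2(\pi\theta/2),
\]
where the inequality uses $\cos(\pi\theta) \ge 0$ for $\theta\in[0,1/2]$. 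Chaining the bounds gives $\|W\xi\|^2 \le 16\cos^2(\pi\theta/2)\,\|\xi\|^2$, which yields the claimed norm estimate.

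The main subtlety is that the obvious submultiplicative bound $\|(1-x)(1-y)\| \le \|1-x\|\,\|1-y\| \le 4$ is too weak, and so is the Cauchy--Schwarz variant $\|W\|^2 = \|X^{1/2}YX^{1/2}\| \le \|X\|\,\|Y\| \le 16$ coming from the factorization $W^*W = (1-v^*)X(1-v)$; both fail because for irrational $\theta$ the diagonal operator $X = \pi_\theta(2-x-x^*)$ already has norm $4$. The genuine improvement requires exploiting the interaction between the diagonal part $X$ and the shift in $1-v$: the neighboring diagonal values $4\sin^2(\pi m\theta)$ and $4\sin^2(\pi(m+1)\theta)$ cannot both be close to $4$ unless $\theta$ is close to $1/2$, and the trig identity is precisely what makes this quantitative.
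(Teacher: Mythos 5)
Your argument is correct: the entrywise formula $(W\xi)_m=(1-e^{2\pi i m\theta})(\xi_m-\xi_{m-1})$ is right in both the rational case ($\ell_2(\IZ/q\IZ)$, where the reindexing is cyclic and $\sin^2(\pi m\theta)$ is well defined mod $q$) and the irrational case, the elementary bound $|\xi_m-\xi_{m-1}|^2\le 2(|\xi_m|^2+|\xi_{m-1}|^2)$ together with the identity $\sin^2 A+\sin^2 B=1-\cos(A+B)\cos(A-B)$ and $\cos\pi\theta\ge 0$ on $[0,1/2]$ gives exactly $\|W\xi\|^2\le 16\cos^2(\pi\theta/2)\|\xi\|^2$, which is the asserted estimate. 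The route, however, is genuinely different from the paper's. The paper stays inside the \ca{} $\cA_\theta$: it expands $(1-y)^*(1-x)^*(1-x)(1-y)$ into its 16 terms, uses that $z$ acts as the scalar $e^{2\pi i\theta}$ to get $|1+z|=2\cos\pi\theta$ for the grouped terms $-(1+z)x$ and $-(1+z)^*x^*$, and bounds the remaining mixed terms $x(zy^*+y)+x^*(z^*y^*+y)$ by $4\cos\pi\theta$ via a row--column factorization, totalling $8+8\cos\pi\theta=16\cos^2(\pi\theta/2)$. Your computation is more concrete and avoids any operator factorization trick, at the price of being tied to the explicit model of $\pi_\theta$ on $\ell_2(\IZ/q\IZ)$; the paper's argument only uses that $z$ is sent to the scalar $e^{2\pi i\theta}$, so it is valid in any representation with that property. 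For the lemma as stated (and for its use in Theorem~\ref{thm:formula}, where the operator is $\pi_\theta((1-x_1)(1-y_1))$, a copy of $W$ tensored with identities, hence of the same norm) this extra generality is not needed, so both proofs serve equally well; the underlying mechanism is the same in both, namely that adjacent diagonal entries of $X_\theta$ differ by a phase $2\pi\theta$, which is where the factor $\cos\pi\theta$ enters.
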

\begin{proof}
The expansion of $(1-y)^*(1-x)^*(1-x)(1-y)$ has 16 terms (counting multiplicity) 
and among them are 
$-(1+z)x$, $-(1+z)^*x^*$ and $x(zy^*+y)+x^*(z^*y^*+y)$.
One has $|1+z|=2\cos\pi\theta$ and 
\begin{align*}
\|x(zy^* +  &  y) + x^*(z^*y^*+y)\|
 \le \|\left[\begin{smallmatrix} xy^* & x^*y^*\end{smallmatrix}\right]\| 
   \|\left[\begin{smallmatrix} z +y^2 \\ z^*+y^2\end{smallmatrix}\right]\| \\
 &\le \sqrt{2}  \|(z+y^2)^*(z+y^2)+(z^*+y^2)^*(z^*+y^2)\|^{1/2} 
   =4\cos\pi\theta.
\end{align*}
Hence 
$\|\pi_\theta((1-y)^*(1-x)^*(1-x)(1-y))\|
\le 8 + 8\cos\pi\theta = 16\cos^2(\pi\theta/2)$.
\end{proof}

For a positive operator $A$, we denote by $\IP_{A\le\delta}$ 
(resp.\ $\IP_{A>\delta}=1-\IP_{A\le\delta}$) the spectral projection 
of $A$ corresponding to the spectrum $[0,\delta]$ (resp.\ $(\delta,\infty)$). 
We also denote by $\IP_{A\le\delta \,\wedge\, B\le \delta}$ etc.\ for 
the orthogonal projection onto $\ran\IP_{A\le\delta}\cap \ran\IP_{B\le\delta}$ etc. 
Note that if $A$ and $B$ commute, then so are their spectral projections and 
$\IP_{A\le\delta \,\wedge\, B\le \delta}=\IP_{A\le\delta}\IP_{B\le\delta}$. 

\begin{lem}\label{lem:xsmall}
For every $\theta\in (0,1/2]$ and $0<\delta<2(1-\cos\pi\theta)$, one has 
\[
\IP_{X_\theta\le\delta} Y_\theta \IP_{X_\theta\le\delta} = 2 \IP_{X_\theta\le\delta}, 
\]
the same with $X_\theta$ and $Y_\theta$ interchanged, and 
\[
\| \IP_{Y_\theta\le\delta} \IP_{X_\theta\le\delta} \| \le\sqrt{\frac{2}{4-\delta}}.
\]
In particular, $\ell_2(\IZ/q\IZ)$ is decomposed into a direct sum
\[
\ell_2(\IZ/q\IZ) = \ran  \IP_{X_\theta\le\delta} + \ran  \IP_{Y_\theta\le\delta} + 
\ran \IP_{X_\theta>\delta \,\wedge\, Y_\theta>\delta}
\]
and the corresponding (not necessarily orthogonal) projections 
have norm at most $\sqrt{\frac{4-\delta}{2-\delta}}$.
\end{lem}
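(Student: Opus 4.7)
The plan is to diagonalise $X_\theta$, reduce the two projection identities to a combinatorial ``gap'' statement about a set $S\subset\IZ/q\IZ$, derive the norm bound by a spectral decomposition with respect to $Y_\theta$, and finish with a standard angle-of-subspaces argument. Since $\pi_\theta(x)\delta_m=e^{2\pi\imath m\theta}\delta_m$, the operator $X_\theta=\diag_m 4\sin^2(m\pi\theta)$ is diagonal in the standard basis, so $P:=\IP_{X_\theta\le\delta}$ is the coordinate projection onto $\lh\{\delta_m:m\in S\}$ with $S:=\{m:4\sin^2(m\pi\theta)\le\delta\}$. Because $Y_\theta\delta_m=2\delta_m-\delta_{m-1}-\delta_{m+1}$, the identity $PY_\theta P=2P$ is equivalent to the statement that $S$ contains no two indices differing by $\pm 1$ (mod $q$). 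The identity with $X_\theta$ and $Y_\theta$ swapped then follows from the unitary equivalence of $(X_\theta,Y_\theta,Z_\theta)$ with $(Y_\theta,X_\theta,Z_\theta)$ recalled earlier in the section.

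For the gap statement I would use the ``law of cosines'' identity
\[
\sin^2(\pi\theta)=\sin^2(m\pi\theta)+\sin^2((m+1)\pi\theta)-2\sin(m\pi\theta)\sin((m+1)\pi\theta)\cos(\pi\theta),
\]
obtained by squaring $\sin(\pi\theta)=\sin((m+1)\pi\theta)\cos(m\pi\theta)-\cos((m+1)\pi\theta)\sin(m\pi\theta)$ and substituting $\cos^2=1-\sin^2$. If both $m$ and $m+1$ belonged to $S$, each sine would satisfy $|\sin|\le\sqrt{\delta}/2$; the right-hand side is then maximised when the two sines have opposite signs, yielding $\sin^2(\pi\theta)\le(\delta/2)(1+\cos\pi\theta)$. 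Rearranging via $\sin^2(\pi\theta)=(1-\cos\pi\theta)(1+\cos\pi\theta)$ gives $\delta\geq 2(1-\cos\pi\theta)$, contradicting the hypothesis.

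For the norm bound, set $Q:=\IP_{Y_\theta\le\delta}$ and take a unit vector $\xi\in\ran P$; the first identity gives $\ip{Y_\theta\xi,\xi}=2$. Since $Q$ commutes with $Y_\theta$ and satisfies $QY_\theta Q\le\delta Q$ and $(1-Q)Y_\theta(1-Q)\le 4(1-Q)$, splitting $\xi=Q\xi+(1-Q)\xi$ yields $2\le\delta\|Q\xi\|^2+4(1-\|Q\xi\|^2)$, whence $\|Q\xi\|^2\le 2/(4-\delta)=:c^2$. The assumption $\delta<2(1-\cos\pi\theta)\le 2$ forces $c<1$, so $\ran P\cap\ran Q=0$ and $\ran P+\ran Q$ is closed with orthogonal complement $\ran\IP_{X_\theta>\delta}\cap\ran\IP_{Y_\theta>\delta}=\ran P_3$; hence the three subspaces span all of $\ell_2(\IZ/q\IZ)$. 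The oblique projections onto $\ran P$ and $\ran Q$ in this decomposition satisfy the standard bound $1/\sqrt{1-c^2}=\sqrt{(4-\delta)/(2-\delta)}$, and the orthogonal projection onto $\ran P_3$ satisfies it trivially.

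The main obstacle is the gap step. The naive triangle-inequality estimate $|\sin(\pi\theta)|\le|\sin(m\pi\theta)|+|\sin((m+1)\pi\theta)|\le\sqrt{\delta}$ yields only $\delta\geq\sin^2(\pi\theta)=(1-\cos\pi\theta)(1+\cos\pi\theta)$, which is strictly weaker than $\delta\geq 2(1-\cos\pi\theta)$ whenever $\theta\in(0,1/2]$. Recovering the missing factor $2/(1+\cos\pi\theta)$ requires retaining the exact cross-term $-2\sin(m\pi\theta)\sin((m+1)\pi\theta)\cos(\pi\theta)$ from the law of cosines; the rest of the argument is bookkeeping.
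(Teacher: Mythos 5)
Your proposal is correct and follows essentially the same route as the paper: diagonalize $X_\theta$, show the sublevel set contains no adjacent indices (so $\IP_{X_\theta\le\delta}Y_\theta\IP_{X_\theta\le\delta}=2\IP_{X_\theta\le\delta}$), get the swapped identity from the unitary equivalence $(X_\theta,Y_\theta)\sim(Y_\theta,X_\theta)$, derive the norm bound from $Y_\theta\le 4-(4-\delta)\IP_{Y_\theta\le\delta}$ sandwiched by $\IP_{X_\theta\le\delta}$, and finish with the angle-of-subspaces estimate $1/\sqrt{1-c^2}=\sqrt{(4-\delta)/(2-\delta)}$. The only (cosmetic) difference is in the gap step: the paper deduces it from the inclusion of the sublevel set in $\{m: m\theta\in(-\theta/2,\theta/2)+\IZ\}$, while you use the law-of-cosines identity for $\sin^2\pi\theta$; both give the same sharp threshold $\delta<2(1-\cos\pi\theta)$.
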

\begin{proof}
We observe that $\IP_{X_\theta\le\delta}$ is the projection 
onto $\ell_2(E)$ with 
\[
E:=\{ m :  2(1-\cos2m\pi\theta) \le \delta \}
 \subset \{ m : m\theta \in (-\theta/2,\theta/2) + \IZ \}.
\]
The set $E$ does not contain consecutive numbers and the first assertion follows. 
The second follows from the unitary equivalence of the pairs 
$(X_\theta,Y_\theta)$ and $(Y_\theta,X_\theta)$. 
Since 
$Y_\theta \le \delta \IP_{Y_\theta\le\delta} + 4(1-\IP_{Y_\theta\le\delta})
 =4-(4-\delta)\IP_{Y_\theta\le\delta}$, 
one has
\[
2\IP_{X_\theta\le\delta}\le 4\IP_{X_\theta\le\delta}
  - (4-\delta)\IP_{X_\theta\le\delta}\IP_{Y_\theta\le\delta}\IP_{X_\theta\le\delta}
\]
and $\| \IP_{Y_\theta\le\delta} \IP_{X_\theta\le\delta} \|^2=\| \IP_{X_\theta\le\delta}\IP_{Y_\theta\le\delta}\IP_{X_\theta\le\delta}\|\le 2/(4-\delta)$.
This gives the desired estimate 
for $\| \IP_{Y_\theta\le\delta} \IP_{X_\theta\le\delta} \|$.
We remark that this estimate can be improved to $\approx 1/\sqrt{3}$ 
if $\theta$ is away from $1/2$ and $\delta>0$ is small enough. 
Indeed, the gaps of $E$ will have length at least two and hence 
any unit vectors $\xi\in \ran \IP_{X_\theta\le\delta}$ and 
$\eta\in\IP_{Y_\theta\le\delta}$ satisfy
\[
|\ip{\xi,\eta}| \approx |\ip{\xi,\frac{1}{3}\pi_\theta(1+y+y^*)\eta}|
=|\ip{\frac{1}{3}\pi_\theta(1+y+y^*)\xi,\eta}| 
\le1/\sqrt{3}.
\]

The projection onto the third subspace is orthogonal. 
On the other hand, 
any $\xi+\eta \in \ran \IP_{X_\theta\le\delta} + \ran \IP_{Y_\theta\le\delta}$ 
satisfies 
\[
\|\xi+\eta\|^2\geq\|\xi\|^2+\|\eta\|^2-2\| \IP_{Y_\theta\le\delta} \IP_{X_\theta\le\delta} \|\|\xi\|\|\eta\|
\geq (1-\| \IP_{Y_\theta\le\delta} \IP_{X_\theta\le\delta} \|^2) \|\xi\|^2.
\]
This gives the desired norm estimate. 
\end{proof}
Now, we consider this time the cubic tensor product $\cA_\theta\otimes\cA_\theta\otimes\cA_\theta$. 
This arises as an irreducible representation of the higher dimensional Heisenberg group 
\[
\bH_3 :=\{ 
\left[\begin{smallmatrix}
 1 & \ast & \ast & \ast & \ast \\
  & 1 & 0&0 & \ast \\
  & & 1& 0& \ast \\
& & & 1 & \ast \\
 & & & & 1\end{smallmatrix}\right]\}
\subset \SL(5,\IZ).
\]
We put $x_i:=e_{1,i+1}(1)$, $y_i:=e_{i+1,5}(1)$, and $z:=e_{1,5}(1)$ in $\bH_3$, 
where we recall that $e_{i,j}(1)$ is the elementary matrix defined 
in the beginning of the previous section. 
Note that $[x_i,y_i]=z$ and $[x_i,y_j]=1$ for $i\neq j$. 
Hence $\bH_3$ is isomorphic to the quotient of 
$\bH \times \bH \times \bH$ modulo $z$ are identified. 
As before, we write $X_i := (1-x_i)^*(1-x_i)$, etc. 
This should not be confused with $X_\theta$ in $\cA_\theta$.  

\begin{thm}\label{thm:formula}
There are $R>0$ and $\ve>0$ such that
\[
R(X_1 Y_2 + Y_1 X_2 + X_1 Y_3 + Y_1 X_3)
 + X_1 X_2 + Y_1 Y_2 + X_1 Y_1 + Y_1 X_1 \geq \ve Z
\]
holds in $\cst[\bH_3]$. 
\end{thm}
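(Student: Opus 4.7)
My plan is to reduce to each irreducible representation $\pi_\theta^{\otimes 3}$ on $\cA_\theta^{\otimes 3}$ and split into cases on the size of $\theta$. First I would extend Lemma~\ref{lem:faithful} to $\bH_3$: the group $\bH_3$ is a central quotient of $\bH\times\bH\times\bH$ identifying the three copies of the center, so its irreducible unitary representations are exactly $\pi_\theta^{\otimes 3}$ for $\theta\in[0,1)$. An amenability-plus-trace argument parallel to the proof of Lemma~\ref{lem:faithful} (using the canonical trace $\int_0^1\tau_\theta^{\otimes 3}\,d\theta$) shows $\bigoplus_{\theta}\pi_\theta^{\otimes 3}$ is faithful on $\cst[\bH_3]$. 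By the $\theta\leftrightarrow 1-\theta$ symmetry, it suffices to establish the inequality in $\cA_\theta^{\otimes 3}$ for each $\theta\in[0,1/2]$.

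For $\theta\in[0,\theta_0]$, with $\theta_0$ as in Lemma~\ref{lem:smalltheta}, I apply that lemma to the first two tensor factors (tensored with $1$ in the third). The extra term $R(X_1Y_3+Y_1X_3)=RX_\theta\otimes 1\otimes Y_\theta+RY_\theta\otimes 1\otimes X_\theta$ is a sum of tensor products of commuting positive operators and hence nonnegative, so the desired inequality follows with the same $\ve$.

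For $\theta\in[\theta_0,1/2]$ the scalar $Z_\theta=4\sin^2\pi\theta$ is bounded below by the positive constant $4\sin^2\pi\theta_0$, so it suffices to show that the LHS is bounded below by a fixed positive constant independent of $\theta$. Here my plan is to use Lemma~\ref{lem:xsmall} to decompose $\ell_2(\IZ/q\IZ)$ in the first tensor factor as $\ran\IP_{X_1\le\delta}+\ran\IP_{Y_1\le\delta}+\ran\IP_{X_1>\delta\,\wedge\,Y_1>\delta}$ for a suitably small $\delta<4\sin^2(\pi\theta_0/2)$. On the third piece both $X_1,Y_1\geq\delta$, so $X_1X_2+Y_1Y_2\geq\delta(X_2+Y_2)\geq\delta\sin\pi\theta_0$ by Corollary~\ref{cor:XYZ1}. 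On $\ran\IP_{X_1\le\delta}$ the compression identity $\IP_{X_1\le\delta}Y_1\IP_{X_1\le\delta}=2\IP_{X_1\le\delta}$ from Lemma~\ref{lem:xsmall} makes $RY_1X_3$ effectively contribute like $2RX_3$, which combined with $X_3+Y_3\geq\sin\pi\theta_0$ on factor~3 produces a positive lower bound once $R$ is large; the case $\ran\IP_{Y_1\le\delta}$ is symmetric. The non-positive term $X_1Y_1+Y_1X_1$ is controlled using Proposition~\ref{prop:XYZ2}, and the cross-factor contributions that arise from the non-orthogonality of $\ran\IP_{X_1\le\delta}+\ran\IP_{Y_1\le\delta}$ are absorbed using Lemma~\ref{lem:prodnorm} together with the estimate $\|\IP_{Y_1\le\delta}\IP_{X_1\le\delta}\|\le\sqrt{2/(4-\delta)}$ from Lemma~\ref{lem:xsmall}.

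The hard part is this third step. The decomposition is not orthogonal, and the crude scalar lower bound $X_1Y_1+Y_1X_1\geq-2(X_1+Y_1)\sqrt{Z_\theta}$ is far from being dominated by the $(1,2)$-factor positive contributions alone. The role of the third tensor factor now becomes transparent: the term $R(X_1Y_3+Y_1X_3)$ is exactly the extra resource needed to absorb the residual when $X_1$ or $Y_1$ happens to be small, which is precisely the regime excluded from Lemma~\ref{lem:smalltheta}. The delicate balancing of $R$, $\delta$, and $\theta_0$ required to make this work is where the main technical weight of the proof lies.
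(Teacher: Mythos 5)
Your reduction to $\cA_\theta^{\otimes3}$ via the adapted Lemma~\ref{lem:faithful} and your treatment of $\theta\in[0,\theta_0]$ via Lemma~\ref{lem:smalltheta} agree with the paper. The genuine gap is the case $\theta\in[\theta_0,1/2]$, which you only outline, and the outline as it stands does not work. First, your decomposition $\zeta=\xi+\eta+\gamma$ is neither orthogonal nor invariant under the operators involved, so the quadratic form has off-diagonal contributions; the dangerous ones are those carrying the coefficient $R$, e.g.\ $\ip{R\,Y_1X_2\,\xi,\eta}$, which are a priori of size $O(R)\|\xi\|\|\eta\|$ and are not touched by Lemma~\ref{lem:prodnorm} or by $\|\IP_{Y_1\le\delta}\IP_{X_1\le\delta}\|\le\sqrt{2/(4-\delta)}$ (the vector $Y_1X_2\xi$ does not lie in the relevant spectral subspaces). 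Since you also need $R$ large, these terms cannot be ``absorbed'' as claimed. Second, on $\ran\IP_{X_1\le\delta}$ the compressed $R$-terms only yield $2R(X_2+X_3)$ (plus $2Y_2$ from $Y_1Y_2$); no $Y_3$-term is available there, so your appeal to $X_3+Y_3\geq\sin\pi\theta_0$ via Corollary~\ref{cor:XYZ1} does not apply as written. Third, controlling $X_1Y_1+Y_1X_1$ by Proposition~\ref{prop:XYZ2} only gives $\geq-2(X_1+Y_1)\sqrt{Z_\theta}$, which, as you yourself note, is far too lossy, and nothing in your sketch replaces it.

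What closes this case in the paper is a different use of the $R$-terms: a dichotomy. Either $\ip{R(X_1Y_2+Y_1X_2+X_1Y_3+Y_1X_3)\zeta,\zeta}$ already exceeds $\ve Z_\theta$ (done), or, since $R$ may be chosen large relative to the fixed $\delta$, the spectral projections $\IP_{X_1Y_2>\delta^2}$, $\IP_{Y_1X_2>\delta^2}$, $\IP_{X_1Y_3>\delta^2}$, $\IP_{Y_1X_3>\delta^2}$ almost annihilate $\zeta$. This forces the alignment $\gamma\approx0$, $\xi\approx\IP_{X_2\le\delta}\xi\approx\IP_{X_3\le\delta}\xi$ and $\eta\approx\IP_{Y_2\le\delta}\eta\approx\IP_{Y_3\le\delta}\eta$. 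Then $X_1X_2+Y_1Y_2$ contributes about $4(\|\xi\|^2+\|\eta\|^2)$, while the pairing $|\ip{(X_1Y_1+Y_1X_1)\xi,\eta}|$ is bounded by $16\cos(\pi\theta/2)\cdot\frac{2}{4-\delta}\,\|\xi\|\,\|\eta\|<8\|\xi\|\,\|\eta\|$, using Lemma~\ref{lem:prodnorm} together with the compression identity and the angle estimates of Lemma~\ref{lem:xsmall} applied in the \emph{second and third} tensor factors --- estimates that are usable only because of the alignment just described. This dichotomy-plus-alignment mechanism is the missing idea in your proposal: the ``delicate balancing of $R$, $\delta$, $\theta_0$'' you defer is not a routine completion of your scheme but requires this genuinely different argument.
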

\begin{proof}
By Lemma~\ref{lem:faithful} (adapted to this case), 
it suffices to prove the assertion 
in $\cA_\theta\otimes\cA_\theta\otimes\cA_\theta$ for each $\theta\in[0,1/2]$. 
We write $X_{i,\theta}$ for $X_\theta$ in the $i$-th tensor component. 
For a unit vector 
$\zeta\in\ell_2(\IZ/q\IZ) \otimes \ell_2(\IZ/q\IZ) \otimes \ell_2(\IZ/q\IZ)$,
we need to prove 
\begin{align*}
&\ip{ \bigl( R ( X_{1,\theta} Y_{2,\theta} + Y_{1,\theta} X_{2,\theta}
 + X_{1,\theta}Y_{3,\theta} + Y_{1,\theta}X_{3,\theta})\\
&\qquad +X_{1,\theta} X_{2,\theta} + Y_{1,\theta} Y_{2,\theta} + X_{1,\theta} Y_{1,\theta} + Y_{1,\theta} X_{1,\theta}\bigr) \zeta,\zeta}\geq \ve Z_\theta.
\end{align*}
By Lemma~\ref{lem:smalltheta}, we are already done for $\theta\in[0,\theta_0]$.
For application of Lemma~\ref{lem:xsmall}, fix $0<\delta<2(1-\cos\pi\theta_0)$ 
small enough and consider $\theta\in[\theta_0,1/2]$. 
Since we may choose $R>1$ arbitrarily large with respect to the fixed $\delta$, 
we may assume 
\[
\max\{ \|\IP_{ X_{1,\theta} Y_{2,\theta}> \delta^2}\zeta\|,\, 
\|\IP_{ Y_{1,\theta} X_{2,\theta}> \delta^2}\zeta\|,\, 
\|\IP_{ X_{1,\theta} Y_{3,\theta}> \delta^2}\zeta\|,\, 
\|\IP_{ Y_{1,\theta} X_{3,\theta} > \delta^2}\zeta\|\}<\delta.
\]
As described in Lemma~\ref{lem:xsmall}, 
we consider the decomposition 
\[
\zeta=\xi+\eta+\gamma
 \in \ran\IP_{X_{1,\theta}\le\delta} + \ran\IP_{Y_{1,\theta}\le\delta}
 + \ran\IP_{X_{1,\theta}>\delta\,\wedge\, Y_{1,\theta}>\delta}.
\]
Note that $\max\{\|\xi\|,\|\eta\|,\|\gamma\|\}\le2$.
By writing $\approx_\delta$, we will mean that the difference is at most $\delta$. 
Since $\zeta\approx_\delta\IP_{ X_{1,\theta} Y_{2,\theta}\le \delta^2}\zeta$ and 
 $\IP_{Y_{2,\theta}>\delta\, \wedge\, X_{1,\theta} Y_{2,\theta} \le \delta^2}
 \le\IP_{X_{1,\theta}\le\delta\,\wedge\,Y_{2,\theta}>\delta}$, one has
\[
\IP_{Y_{2,\theta}>\delta}\zeta
\approx_{\delta} \IP_{X_{1,\theta}\le\delta\,\wedge\,Y_{2,\theta}>\delta}\zeta.
\]
It follows that
\[
\IP_{Y_{2,\theta}>\delta}\eta 
+ \IP_{Y_{2,\theta}>\delta}\gamma
 \approx_{\delta} \IP_{X_{1,\theta}\le\delta\,\wedge\,Y_{2,\theta}>\delta}(\xi+\eta+\gamma) - \IP_{Y_{2,\theta}>\delta}\xi 
 = \IP_{X_{1,\theta}\le\delta\,\wedge\,Y_{2,\theta}>\delta}\eta. 
\]
Since $\IP_{Y_{2,\theta}>\delta}$ leaves 
$\ran\IP_{X_{1,\theta}\le\delta}$ and $\ran\IP_{Y_{1,\theta}\le\delta}$ invariant, 
this implies
\[
\IP_{Y_{2,\theta}>\delta}\eta \approx_{\delta} 
  \IP_{X_{1,\theta}\le\delta\,\wedge\,Y_{2,\theta}>\delta}\eta
\ \mbox{ and }\ 
\IP_{Y_{2,\theta}>\delta}\gamma \approx_{\delta}0.
\] 
Hence, in combination with Lemma~\ref{lem:xsmall} that  
$\IP_{Y_{1,\theta}\le\delta}\IP_{X_{1,\theta}>\delta}\IP_{Y_{1,\theta}\le\delta}
\geq \frac{1}{4}\IP_{Y_{1,\theta}\le\delta}$, one obtains
$\delta^2 \geq \| \IP_{X_{1,\theta}>\delta}\IP_{Y_{2,\theta}>\delta}\eta \|^2 
 \geq\frac{1}{4}\|\IP_{Y_{2,\theta}>\delta}\eta \|^2$,
that is, 
\[
\eta \approx_{2\delta} \IP_{Y_{2,\theta}\le\delta}\eta.
\] 
The same consideration on $Y_{1,\theta}X_{2,\theta}$ yields 
\[
\IP_{X_{2,\theta}>\delta}\gamma \approx_{\delta}0 
\ \mbox{ and }\ 
\xi \approx_{2\delta} \IP_{X_{2,\theta}\le\delta}\xi.
\]
Thus $\IP_{Y_{2,\theta}>\delta}\IP_{X_{2,\theta}\le\delta}\gamma\approx_\delta
\IP_{Y_{2,\theta}>\delta}\gamma\approx_\delta 0$ and, 
by Lemma~\ref{lem:xsmall} again,
\[
\|\gamma\|^2\approx_{\delta^2}\|\IP_{X_{2,\theta}\le\delta}\gamma\|^2
\le 4 \|\IP_{Y_{2,\theta}>\delta}\IP_{X_{2,\theta}\le\delta}\gamma\|^2
\le 16\delta^2. 
\]
Further, the same for $X_{1,\theta}Y_{3,\theta}$ 
and $Y_{1,\theta}X_{3,\theta}$ yields 
\[
\xi \approx_{2\delta} \IP_{X_{3,\theta}\le\delta}\xi
\ \mbox{ and }\ 
\eta \approx_{2\delta} \IP_{Y_{3,\theta}\le\delta}\eta.
\]
Now a routine but tedious calculation with Lemma~\ref{lem:xsmall} yields
\[
\ip{X_{1,\theta}X_{2,\theta}\zeta,\zeta}
 \approx_{C\delta}\ip{X_{1,\theta}X_{2,\theta} \IP_{Y_{1,\theta}\le\delta \,\wedge\, Y_{2,\theta}\le\delta} \eta, 
   \IP_{Y_{1,\theta}\le\delta \,\wedge\, Y_{2,\theta}\le\delta}\eta} 
\approx_{16\delta}4\|\eta\|^2
\]
for some absolute constant $C$ (e.g., $C=1000$ should be enough),
and likewise 
\[
\ip{Y_{1,\theta}Y_{2,\theta}\zeta,\zeta} \approx_{C\delta} 4\|\xi\|^2.
\]
On the other hand, by Lemmas~\ref{lem:prodnorm} and \ref{lem:xsmall}, 
\begin{align*}
&|\ip{(X_{1,\theta}Y_{1,\theta}+Y_{1,\theta}X_{1,\theta})\zeta,\zeta}|\\
 &\qquad \approx_{C\delta} 2|\ip{X_{1,\theta}Y_{1,\theta} 
   \IP_{X_{1,\theta}\le\delta \,\wedge\, X_{2,\theta}\le\delta \,\wedge\, X_{3,\theta}\le\delta}\xi,
  \IP_{Y_{1,\theta}\le\delta \,\wedge\, Y_{2,\theta}\le\delta \,\wedge\, Y_{3,\theta}\le\delta}\eta}|\\
&\qquad\le 2 \| \IP_{Y_{1,\theta}\le\delta}\pi_\theta(1-x_1^*)\|
  \|\pi_\theta((1-x_1)(1-y_1))\| \|\pi_\theta(1-y_1^*)\IP_{X_{1,\theta}\le\delta}\|\\
 &\qquad\qquad\times \| \IP_{X_{2,\theta}\le\delta}\IP_{Y_{2,\theta}\le\delta}\|
  \| \IP_{X_{3,\theta}\le\delta}\IP_{Y_{3,\theta}\le\delta}\| \|\xi\|\|\eta\|\\
&\qquad\le 16 (\cos\frac{\pi\theta}{2})\cdot\frac{2}{4-\delta}\|\xi\|\eta\|.
\end{align*}
If we have chosen $\delta>0$ small enough, 
then 
\[
\ve:=8-16 (\cos\frac{\pi\theta_0 }{2})\cdot\frac{2}{4-\delta} > 4C\delta.
\]
Observe that $\delta>0$ and $\ve>0$ depends on 
the absolute constants $\theta_0>0$ and $C>0$, 
but not on $\theta\in[\theta_0,1/2]$. 
In the end, 
\begin{align*}
|\ip{(X_{1,\theta}Y_{1,\theta}+X_{1,\theta}Y_{1,\theta})\zeta,\zeta}| 
 &\le (8-\ve)\|\xi\|\eta\| + C\delta\\
 &\le 4(1-\ve/2)(\|\xi\|^2+\|\eta\|^2)+C\delta\\
 &\le \ip{(X_{1,\theta}X_{2,\theta}+Y_{1,\theta}Y_{2,\theta})\zeta,\zeta}
  -\ve + 3C\delta.
\end{align*}
This completes the proof. 
We remark that the above proof for $\theta\in[\theta_0,1/2]$ 
is not as tight as it appears (and $\ve>0$ can be ``visible''), 
because if $\theta$ is around $1/2$, 
then $\cos\frac{1}{2}\pi\theta\approx1/\sqrt{2}$; and 
if $\theta$ is away from $1/2$, then 
$\|\IP_{X_{\theta}\le\delta}\IP_{Y_{\theta}\le\delta}\|$ 
is bounded by $\approx1/\sqrt{3}$.
\end{proof}
\section{Proof of Main Theorem, Postlude}\label{sec:postlude}

Since $\cR:=\IZ\ip{t_1,\ldots,t_d}$ is \emph{commutative}, 
we may apply Theorem~\ref{thm:formula} to 
$x_1=e_{1,2}(t_r)$, $x_2=e_{1,3}(t_s)$, $x_3=e_{1,4}(t_r)$, 
$y_1=e_{2,5}(t_s)$, $y_2=e_{3,5}(t_r)$, $y_3=e_{4,5}(t_s)$, 
and $z=e_{1,5}(t_rt_s)$ in $\EL_5(\cR)$. 
This yields $(\spadesuit)$ in Section~\ref{sec:prelude} and 
the proof of Main Theorem is complete. 
\hspace{\fill}\qedsymbol
\smallskip

The terms $X_1Y_2 = E_{1,2}(t_r)E_{3,5}(t_r)$ 
and $Y_1X_2=E_{2,5}(t_s)E_{1,3}(t_s)$ are 
diagonal w.r.t.\ $\{t_r, t_s\}$. This causes an annoying 
dependence of $R$ on $d$ in the formula $(\heartsuit)$, 
which results in dependence of $n_0$ of $d$ in Main Theorem. 

\section{Real group algebras and property $\mathrm{H}_{\mathrm{T}}$}\label{sec:realgrpalg}
In this section, we continue the study of \cite{nt,nt2,nitsche,cec,ncrag} about positivity 
in real group algebras. 
In addition to the notations from Section~\ref{sec:prelim}, 
we denote by 
\[
I[\Ga] :=\lh\{ 1-x : x\in \Ga\} \subset \IR[\Ga]
\] 
the \emph{augmentation ideal}. 
We observe that $\Sigma^2I[\Ga] = I[\Ga] \cap \Sigma^2\IR[\Ga]$ and 
hence there is no ambiguity about the order $\preceq$ on $I[\Ga]$. 
In \cite{ncrag}, it was observed that the combinatorial Laplacian 
$\Delta \in \Sigma^2I[\Ga]$
is an \emph{order unit} for $I[\Ga]$ (more precisely for $I[\Ga]^\her$, but this abuse 
of terminology should not cause any problem). That is to say, 
for every $\xi\in I[\Ga]^\her$, there is $R>0$ 
such that $\xi\preceq R\Delta$. We will indicate this by the notation $\xi\pprec\Delta$. 

We review the relation between positive linear functionals 
on $I[\Ga]$ and \emph{$1$-cocycles} (with unitary coefficients). 
A linear functional $\vp$ on $I[\Ga]$ is said to be \emph{positive} if 
it is selfadjoint and $\vp(\Sigma^2I[\Ga])\subset\IR_{\geq0}$. 
One has $\vp(\Delta)=0$ if and only if $\vp=0$. 
Every positive linear functional $\vp$ gives rise to a semi-inner product 
$\ip{\xi,\eta}:=\vp(\xi^*\eta)$ and the corresponding semi-norm 
$\|\xi\|:=\vp(\xi^*\xi)^{1/2}$ on $I[\Ga]$, with respect to which 
the left multiplication by an element of $\Ga$ is orthogonal. 
This is the Gelfand--Naimark construction. 
The map $b\colon\Ga\ni t\mapsto 1-t\in I[\Ga]$ is a $1$-cocycle, i.e., it satisfies 
$b(st)=b(s)+sb(t)$ for every $s,t\in\Ga$.  
We note that $\vp(1-t)=\frac{1}{2}\vp((1-t)^*(1-t))=\frac{1}{2}\| b(t)\|^2$ and  
$\vp(\Delta)=\frac{1}{2}\sum_{s\in S} \|b(s)\|^2$. 
In fact, every $1$-cocycle arises in this way. 
See, e.g., Appendix C in \cite{bhv} and Appendix D in \cite{bo} for a comprehensive treatment. 

It is proved in \cite{ncrag} that $\overline{\Sigma^2 I[\Ga]} = I[\Ga]^\her\cap\overline{\Sigma^2\IR[\Ga]}$. That is to say, 
\begin{align*}
\overline{ \Sigma^2I[\Ga] }
 &:= \{ \xi \in I[\Ga]^\her : \forall\ve>0\ \xi+\ve\Delta \succeq0\}\\
 &= \{ \xi \in I[\Ga]^\her : \vp(\xi) \geq0\mbox{  for every positive linear functional $\vp$ on $I[\Ga]$}\}\\
 &= \{ \xi \in  I[\Ga]^\her : \xi\geq0\mbox{ in }\cst[\Ga]\}
\end{align*}
We also record an easy consequence of the Hahn--Banach 
separation theorem (a.k.a.\ Eidelheit--Kakutani separation theorem in this context). 
For $\xi,\eta\in I[\Ga]^\her$ (or in any real ordered vector 
space with an order unit $\Delta$), the following are equivalent. 
\begin{enumerate}
\item $\vp(\xi)=0\Rightarrow\vp(\eta)\le0$ for every positive linear functional $\vp$ on $I[\Ga]$. 
\item $-\eta \in \overline{ \Sigma^2I[\Ga] - \IR\xi }$. 
\item $\forall\ve>0$ $\exists R\in\IR$ such that $R\xi-\eta + \ve\Delta \succeq 0$. 
\end{enumerate}

We observe that since 
\[
\vp(\Delta^2) = \ip{\Delta,\Delta} = \|\sum_{s\in S} b(s) \|^2.
\]
one has $\vp(\Delta^2)=0$ if and only if the corresponding 
$1$-cocycle $b$ is \emph{harmonic} in the sense $\sum_{s\in S}b(s)=0$. 
This observation recovers Shalom's theorem (\cite{shalom:rigidity}) 
that every finitely generated group without property (T) 
has a non-zero harmonic $1$-cocycle. An essentially same proof 
was given in \cite{nitsche}. 

We record the following well-known fact.  
\begin{itemize}
\item
If a $1$-cocycle $b$ vanishes on 
a normal subgroup $N\triangleleft\Ga$, 
then $N$ acts trivially on $\lh b(\Ga)$ and 
hence $b$ factors through the quotient $\Ga/N$.  
\item
If $b$ is a harmonic $1$-cocycle on $\Ga$,  
then the center $\cZ(\Ga)$ acts trivially on 
$\lh b(\Ga)$ and $\Ga$ acts trivially on $\lh b(\cZ(\Ga))$. 
\item 
Every harmonic $1$-cocycle on an abelian group is an additive homomorphism. 
\end{itemize}
The first assertion is not difficult to show. The second follows from the identity 
$(1-x)b(z)=(1-z)b(x)$ for $x\in\Ga$ and $z\in\cZ(\Ga)$. 
If $b$ is harmonic, then $(|S|-\sum_{s\in S}s)b(z)=0$ and, 
by strict convexity of a Hilbert space, $b(z)=sb(z)$ for $s\in S$ 
and hence for all $s\in\Ga$. 

An additive character $\chi\colon\Ga\to\IR$ can be viewed as 
a harmonic $1$-cocycle. The corresponding positive linear functional 
$\vp_\chi\colon I[\Ga]\to\IR$ is given by $\vp_\chi(1-t)=\frac{1}{2}\chi(t)^2$. 
This should not be confused with the linear extension $\chi\colon I[\Ga]\to\IR$ 
which is not even selfadjoint. 
The positive linear functional $\vp_\chi$ 
factors through the abelianization $I[\Ga^\ab]$. 

We denote the \emph{augmentation power} by  
\[
I^k[\Ga] := \lh (I[\Ga]^k ) \subset \IR[\Ga].
\]
It is well-known and easy to see from the formula 
\[
1-xy = (1-x) + (1-y)  - (1-x)(1-y) \in (1-x) + (1-y) + I^2[\Ga]
\] 
that $I[\Ga]$ is generated as a rng by $\{ 1-s : s\in S\}$ 
and that $\Ga\ni x \mapsto 1-x\in I[\Ga]/I^2[\Ga]$ is 
an additive homomorphism. 
On the other hand, every additive homomorphism 
$\chi$ vanishes on $I^2[\Ga]$, 
because $\chi((1-x)(1-y))=\chi(1-x-y+xy)=0$. 
Hence $I^2[\Ga]=\bigcap_\chi \ker\chi$, where the intersection is taken 
over the additive characters $\chi$ on $\Ga$. 
We will see that $\Delta^2\in\Sigma^2I^2[\Ga]$ need not 
be an order unit for $I^4[\Ga]$, but the following element
\[
\square:=\frac{1}{4}\sum_{s,t \in S}(1-s)^*(1-t)^*(1-t)(1-s) \in \Sigma^2 I^2[\Ga]
\]
is. Since $\square=\Delta^2$ in $I[\Ga^\ab]$, 
one has $\vp_\chi(\square)=\vp_\chi(\Delta^2)=0$ for every 
additive character $\chi$. 
We will prove later that the converse is also true.

\begin{thm}\label{thm:sqher} 
The element $\square$ is an order unit for $I^4[\Ga]$. Namely
\[
I^4[\Ga]^\her=
\{ \xi \in \IR[\Ga]^\her : \pm\xi\pprec\square \}
=\lh\Sigma^2I^2[\Ga]
\] 
and moreover $I^4[\Ga]\cap\Sigma^2\IR[\Ga]=\Sigma^2I^2[\Ga]$. 
\end{thm}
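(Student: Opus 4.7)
The plan is to establish the three identities in turn---first $I^4[\Ga]^\her=\lh\Sigma^2I^2[\Ga]$, then the order-unit assertion for $\square$, and finally $I^4[\Ga]\cap\Sigma^2\IR[\Ga]=\Sigma^2I^2[\Ga]$. The first two are essentially algebraic, while the third rests on the positive linear functionals $\vp_\chi$ developed earlier in this section.

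For $I^4[\Ga]^\her=\lh\Sigma^2I^2[\Ga]$, the inclusion $\supseteq$ is immediate. For $\subseteq$, I use $I^4[\Ga]=I^2[\Ga]\cdot I^2[\Ga]$ to write any hermitian $\xi\in I^4[\Ga]$ as a sum of terms $uv+v^*u^*$ with $u,v\in I^2[\Ga]$, and apply the polarization identity
\[
uv+v^*u^*=(u^*+v)^*(u^*+v)-uu^*-v^*v;
\]
since $I^2[\Ga]$ is $*$-invariant, the three summands on the right all lie in $\Sigma^2I^2[\Ga]$. For the order-unit property, this together with the Cauchy--Schwarz bound $(\sum_i\eta_i)^*(\sum_i\eta_i)\preceq n\sum_i\eta_i^*\eta_i$ reduces matters to $\eta^*\eta\pprec\square$ for $\eta=(1-x)(1-y)$ with $x,y\in\Ga$. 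Since $\Delta$ is an order unit for $I[\Ga]$, one has $(1-x)^*(1-x)\preceq R_x\Delta$, so
\[
\eta^*\eta\preceq R_x(1-y)^*\Delta(1-y)=\tfrac{R_x}{2}\sum_{s\in S}\bigl((1-s)(1-y)\bigr)^*\bigl((1-s)(1-y)\bigr).
\]
Expanding $(1-y)=\sum_k h_k(1-t_k)$ along a word for $y$ gives $(1-s)(1-y)=\sum_k h_k(1-s_k)(1-t_k)$ with $s_k:=h_k^{-1}sh_k$; a Cauchy--Schwarz bound followed by a second application of the order-unit property of $\Delta$ dominates each summand by a constant multiple of $\square$.

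The core of the theorem is $I^4[\Ga]\cap\Sigma^2\IR[\Ga]=\Sigma^2I^2[\Ga]$. Writing $\xi=\sum_i\zeta_i^*\zeta_i\in I^4[\Ga]$, the augmentation $\epsilon$ yields $0=\epsilon(\xi)=\sum_i\epsilon(\zeta_i)^2$, forcing $\zeta_i\in I[\Ga]$. The decisive computation is that $\vp_\chi$ vanishes on $I^3[\Ga]$ for every additive character $\chi$: starting from
\[
(1-g)(1-h)(1-k)=(1-g)+(1-h)+(1-k)-(1-gh)-(1-gk)-(1-hk)+(1-ghk)
\]
and substituting $\vp_\chi(1-x)=\tfrac12\chi(x)^2$, the resulting sum of squared $\chi$-values telescopes to zero by additivity of $\chi$. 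Since $I^4[\Ga]\subseteq I^3[\Ga]$, we then have $0=\vp_\chi(\xi)=\sum_i\vp_\chi(\zeta_i^*\zeta_i)=\sum_i\chi(\zeta_i)^2$, forcing $\chi(\zeta_i)=0$ for every $\chi$ and every $i$. Because $I^2[\Ga]=\bigcap_\chi\ker\chi$ (noted in the excerpt), each $\zeta_i\in I^2[\Ga]$ and hence $\xi\in\Sigma^2I^2[\Ga]$. The remaining inclusion $\{\pm\xi\pprec\square\}\subseteq I^4[\Ga]^\her$ in the middle equality of the displayed formula follows by testing $\xi$ against positive linear functionals factoring through $\IR[\Ga]/I^4[\Ga]$: each such $\vp$ annihilates $\square$, and $\pm\xi\pprec\square$ then forces $\vp(\xi)=0$. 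The main obstacle in the proof is the telescoping calculation of $\vp_\chi$ on $I^3[\Ga]$; once that is in hand, the remaining steps follow cleanly.
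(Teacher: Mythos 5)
Much of your argument is sound, and in places genuinely different from the paper's: the polarization identity gives $I^4[\Ga]^\her=\lh\Sigma^2I^2[\Ga]$ directly; your double use of the order-unit property of $\Delta$ (after expanding $1-y$ along a word in the generators) is a legitimate variant of the paper's inductive word-splitting proof that $((1-x)(1-y))^*((1-x)(1-y))\pprec\square$; and your observation that $\vp_\chi$ vanishes on all of $I^3[\Ga]$ handles $I^4[\Ga]\cap\Sigma^2\IR[\Ga]=\Sigma^2I^2[\Ga]$ cleanly (the paper instead computes $\vp_\chi(\eta^*\eta)=\bigl(\sum_x\eta(x)\chi(x)\bigr)^2$ for $\eta\in I[\Ga]$ and uses $\vp_\chi(\square)=0$; your step ``$\vp_\chi(\zeta_i^*\zeta_i)=\chi(\zeta_i)^2$'' is exactly this identity and deserves the one-line check, but it is fine).

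The genuine gap is the remaining inclusion $\{\xi\in\IR[\Ga]^\her:\pm\xi\pprec\square\}\subseteq I^4[\Ga]^\her$. You show only that $\vp(\xi)=0$ for every positive linear functional $\vp$ on $\IR[\Ga]$ vanishing on $I^4[\Ga]$, and then conclude $\xi\in I^4[\Ga]$. That separation principle is false, because positive functionals on $\IR[\Ga]$ killing $I^4[\Ga]$ are far too scarce: for $\Ga=\IZ=\langle t\rangle$ such a functional corresponds to a positive measure $\mu$ on the circle with $\int|1-e^{i\theta}|^4\,d\mu=0$, hence $\mu$ is a point mass at $1$ and $\vp$ is a multiple of the augmentation; these detect only $I[\Ga]$, so for instance the hermitian element $2-t-t^{-1}\in I^2[\IZ]\setminus I^4[\IZ]$ is annihilated by all of them. (Switching to positive functionals on $I[\Ga]$ vanishing on $I^4[\Ga]$ does not obviously repair this either---whether they separate $I^4[\Ga]^\her$ is precisely the kind of Schoenberg-type question that Proposition~\ref{prop:nonpositivity} shows to be delicate.) The paper's route avoids any such separation claim: from $0\preceq R\square-\xi\preceq 2R\square$ write $R\square-\xi=\sum_i\eta_i^*\eta_i$; the augmentation forces $\eta_i\in I[\Ga]$, and applying the functionals $\vp_\chi$ to the inequality $\sum_i\eta_i^*\eta_i\preceq 2R\square$ inside $I[\Ga]$ (legitimate since $\Sigma^2I[\Ga]=I[\Ga]\cap\Sigma^2\IR[\Ga]$ and $\vp_\chi(\square)=0$) gives $\sum_x\eta_i(x)\chi(x)=0$ for all additive characters $\chi$, i.e.\ $\eta_i\in I^2[\Ga]$, whence $\xi=R\square-\sum_i\eta_i^*\eta_i\in\lh\Sigma^2I^2[\Ga]$. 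You already have every ingredient for this; the fix is to apply your character functionals to a sum-of-squares decomposition of $R\square-\xi$ rather than to $\xi$ itself through a nonexistent supply of globally positive functionals.
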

\begin{proof}
We first prove that the left is contained the middle. 
The proof is similar to that for Lemma~2 in \cite{ncrag}.
Since $\xi^*\eta+\eta^*\xi \preceq \xi^*\xi+\eta^*\eta$
for every $\xi,\eta$, it suffices to show that 
$(1-x)^*(1-y)^*(1-y)(1-x) \pprec \square$ for all $x,y\in\Ga$.
By using the inequality
\begin{align*}
&(1-x_1x_2)^*(1-y)^*(1-y)(1-x_1x_2)\\ 
&\qquad= ((1-x_1)+x_1(1-x_2))^*(1-y)^*(\rule[2.5pt]{20pt}{1pt})\\
&\qquad\preceq 2 (1-x_1)^*(1-y)^*(\rule[2.5pt]{20pt}{1pt})+2(1-x_2)^*(1-x_1^{-1}yx_1)^*(\rule[2.5pt]{20pt}{1pt}),
\end{align*}
one can reduce this to the case $x\in S$, and similarly to the case $y\in S$, 
where the assertion is obvious.
We next show that $\pm\xi\pprec\square$ implies 
$\xi\in \lh\Sigma^2I^2[\Ga]$. 
There is $R>0$ such that $0\preceq R\square-\xi\preceq 2R\square$. 
Thus it remains to show $\sum_i\eta_i^*\eta_i\pprec\square$ 
implies $\eta_i\in I^2[\Ga]$. 
Since $\vp_\chi(\square)=0$ for every additive character $\chi$ on $\Ga$, 
one has 
\[
0=\vp_\chi(\sum_i\eta_i^*\eta_i)=-\frac{1}{2}\sum_{i,x,y}\eta_i(x)\eta_i(y)\chi(x^{-1}y)^2
=\sum_i\bigl(\sum_x\eta_i(x)\chi(x)\bigr)^2,
\]
or equivalently $\eta_i\in\bigcap_{\chi}\ker\chi=I^2[\Ga]$ for all $i$. 
\end{proof}

\begin{cor}
A positive linear functional $\vp$ on $I[\Ga]$ satisfies 
$\vp(\square)=0$ if and only if the associated $1$-cocycle is 
an additive homomorphism. 
\end{cor}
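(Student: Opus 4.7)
The plan is to compute $\vp(\square)$ explicitly in terms of the $1$-cocycle $b\colon\Ga\to I[\Ga]$, $b(t)=1-t$, associated to $\vp$ via the Gelfand--Naimark construction, with $I[\Ga]$ viewed inside its Hilbert completion $H_\vp$, and then to match the two conditions directly.

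First I would observe the elementary identity
\[
(1-t)(1-s)=(1-t)+(1-s)-(1-ts)=b(t)+b(s)-b(ts)\in I[\Ga],
\]
and combine it with $\vp(\square)=\tfrac14\sum_{s,t\in S}\vp((1-s)^*(1-t)^*(1-t)(1-s))$ to obtain
\[
\vp(\square)=\frac{1}{4}\sum_{s,t\in S}\|b(s)+b(t)-b(ts)\|^2.
\]
In particular, $\vp(\square)=0$ is equivalent to $b(ts)=b(s)+b(t)$ for all $s,t\in S$, i.e.\ $b$ is additive on $S$.

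Next I would combine this with the cocycle identity $b(ts)=b(t)+tb(s)$, which rephrases the vanishing condition as $\pi(t)b(s)=b(s)$ for all $s,t\in S$, where $\pi$ denotes the orthogonal representation of $\Ga$ on $H_\vp$. Since $\Ga=\langle S\rangle$, the subspace fixed pointwise by $\pi(S)$ is automatically fixed pointwise by $\pi(\Ga)$, so in fact $\pi(g)b(s)=b(s)$ for every $g\in\Ga$ and every $s\in S$. An induction on word length, using the cocycle identity and $b(s^{-1})=-b(s)$ (derived from $b(1)=0$), then yields $b(\Ga)\subset\lh b(S)$; consequently $\pi(\Ga)$ acts trivially on all of $\lh b(\Ga)$, which is precisely the statement that $b$ is an additive homomorphism.

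The converse is immediate: an additive $b$ satisfies $b(ts)=b(s)+b(t)$ for all $s,t\in\Ga$, so each summand in the displayed formula for $\vp(\square)$ vanishes. The entire argument is short, and I do not foresee a genuine obstacle; the only step that deserves a moment's care is the promotion of invariance of $b(S)$ from the generating set to the whole group, which is a standard consequence of $\Ga=\langle S\rangle$ together with orthogonality of $\pi$.
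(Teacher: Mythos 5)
Your proof is correct, and it takes a genuinely different route from the paper's. The paper derives the forward implication from Theorem~\ref{thm:sqher}: since $\square$ is an order unit for $I^4[\Ga]$, every element of $\Sigma^2 I^2[\Ga]$ is $\pprec\square$, so $\vp(\square)=0$ forces $\vp(\Delta^2)=0$ (hence $b$ is harmonic) and, because $1-[x,y]\in I^2[\Ga]$, also $\|b([x,y])\|=0$; thus $b$ vanishes on $[\Ga,\Ga]$, factors through $\Ga^{\ab}$, and the listed fact that a harmonic cocycle on an abelian group is additive finishes the argument. You instead compute $\vp(\square)$ exactly via the GNS picture, using $(1-t)(1-s)=b(t)+b(s)-b(ts)$ in $I[\Ga]$, obtaining $\vp(\square)=\tfrac14\sum_{s,t\in S}\|b(s)+b(t)-b(ts)\|^2$; vanishing is then equivalent to $\pi(t)b(s)=b(s)$ for $s,t\in S$, and the standard bootstrap (the fixed space of $\pi(S)$ equals that of $\pi(\Ga)$ since $\Ga=\langle S\rangle$, plus induction on word length giving $b(\Ga)\subset\lh b(S)$) upgrades this, together with the cocycle identity, to additivity on all of $\Ga$; the converse is immediate from the same identity. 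Your argument is more elementary and self-contained: it avoids Theorem~\ref{thm:sqher}, harmonicity, and the passage to $\Ga^{\ab}$ altogether, and it yields a quantitative identity expressing $\vp(\square)$ as the exact additivity defect of $b$ on the generators. What the paper's route buys is thematic economy: it reuses the order-unit theorem that the section is built around and the structural facts about harmonic cocycles, at the cost of being less direct. The only wording to polish on your side is minor: $b(s^{-1})=-\pi(s^{-1})b(s)$ in general, so the identity $b(s^{-1})=-b(s)$ uses the already-established invariance (or is unnecessary, since $S$ is symmetric), and triviality of $\pi(\Ga)$ on $\lh b(\Ga)$ gives additivity only in combination with the cocycle identity, which you do have in hand.
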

\begin{proof}
We have already noted that $\vp_\chi(\square)=0$ 
for all additive character $\chi$. 
Conversely, suppose $\vp(\square)=0$. 
Since this implies $\vp(\Delta^2)=0$, the $1$-cocycle $b$ associated 
with $\vp$ is harmonic. 
Moreover, since 
\[
1-[x,y]=(xy-yx)x^{-1}y^{-1}=((1-x)(1-y)-(1-y)(1-x))x^{-1}y^{-1} \in I^2[\Ga],
\]
Theorem~\ref{thm:sqher} implies that $b=0$ 
on the commutator subgroup $[\Ga,\Ga]$. 
Thus $b$ factor through $\Ga^\ab$ and is an additive homomorphism.
\end{proof}

We recall that a finitely generated group $\Ga$ is said to have 
\emph{Shalom's property $\mathrm{H}_{\mathrm{T}}$} 
if every harmonic $1$-cocycle on $\Ga$ is an additive homomorphism. 
Property $\mathrm{H}_{\mathrm{T}}$ coincides with 
Kazhdan's property (T) for groups with finite abelianization. 
It is observed in \cite{shalom:hfd} that 
finitely generated nilpotent groups have property $\mathrm{H}_{\mathrm{T}}$. 
We conjecture that the group $\EL_n(\IZ\langle t_1,\ldots,t_d\rangle)$ has 
property $\mathrm{H}_\mathrm{T}$. 
By the Hahn--Banach separation theorem, one obtains 
the following characterization of property $\mathrm{H}_{\mathrm{T}}$, 
which does not seem useful though.  
\begin{cor}
The finitely generated group $\Ga$ has finite abelianization 
if and only if $\Delta\pprec\square$.
The finitely generated group $\Ga$ has 
property $\mathrm{H}_{\mathrm{T}}$ if and only if 
for every $\ve>0$ there is $R>0$ such that 
$\square\preceq R\Delta^2+\ve\Delta$.
\end{cor}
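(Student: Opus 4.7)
The plan is to deduce both equivalences from the Hahn--Banach separation theorem (the three-item list displayed just after the characterisation of $\overline{\Sigma^2I[\Ga]}$), together with the preceding corollary describing when $\vp(\square)=0$.

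For the first equivalence, I take $\xi=\square$ and $\eta=\Delta$ in the separation theorem. The condition $\Delta\pprec\square$ trivially implies (3); conversely, specialising (3) to $\ve=1/2$ gives $R_{1/2}\square\succeq\tfrac12\Delta$, so $\Delta\pprec\square$. Hence (3) coincides with $\Delta\pprec\square$, and by (1) the latter is equivalent to ``$\vp(\square)=0\Rightarrow\vp(\Delta)\le0$ for every positive linear functional $\vp$ on $I[\Ga]$''. Since $\vp(\Delta)\ge0$ always and $\vp(\Delta)=0\Leftrightarrow\vp=0$, this reduces to ``$\vp(\square)=0\Rightarrow\vp=0$''. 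By the preceding corollary, $\vp(\square)=0$ means that the associated $1$-cocycle is an additive homomorphism $\Ga\to\cH$, and such a $\vp$ vanishes iff that homomorphism does. Because $\Ga^{\ab}$ is finitely generated, every additive homomorphism from $\Ga$ into a Hilbert space is trivial iff $\Ga^{\ab}$ is torsion, iff $\Ga^{\ab}$ is finite.

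For the second equivalence, I apply the same separation theorem with $\xi=\Delta^2$ and $\eta=\square$. Then (3) reads verbatim ``for every $\ve>0$ there is $R>0$ with $\square\preceq R\Delta^2+\ve\Delta$'', while (1) becomes ``$\vp(\Delta^2)=0\Rightarrow\vp(\square)=0$'' (using $\vp(\square)\ge0$). By the identity $\vp(\Delta^2)=\tfrac12\|\sum_{s\in S}b(s)\|^2$ established earlier in this section, $\vp(\Delta^2)=0$ means that the associated $1$-cocycle $b$ is harmonic; by the preceding corollary, $\vp(\square)=0$ means $b$ is an additive homomorphism. Thus the displayed inequality is equivalent to ``every harmonic $1$-cocycle on $\Ga$ is an additive homomorphism'', i.e.\ property $\mathrm{H}_{\mathrm{T}}$.

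There is no substantive obstacle; the only minor subtlety is verifying, in the first equivalence, that the ``$\forall\ve$''-form (3) collapses to the bare $\pprec$-inequality, which is handled by the rescaling observation above.
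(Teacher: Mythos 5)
Your argument is correct and is exactly the route the paper intends: it instantiates the Hahn--Banach three-way equivalence with $(\xi,\eta)=(\square,\Delta)$ and $(\Delta^2,\square)$, and then translates condition (1) via the preceding corollary ($\vp(\square)=0$ iff the cocycle is an additive homomorphism), the identity $\vp(\Delta^2)=\frac12\|\sum_{s\in S}b(s)\|^2$, and the fact that every $1$-cocycle (in particular every additive character) arises from a positive functional. The paper leaves these details implicit, so your write-up simply fills in the same proof, including the harmless rescaling needed to pass from the ``$\forall\ve$'' form of (3) to $\Delta\pprec\square$.
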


Property $\mathrm{H}_{\mathrm{T}}$ for nilpotent groups 
also follows from Corollary~\ref{cor:XYZ1} that 
if a commutator $z=[x,y]$ is central, 
then $(1-z)^*(1-z) \ll \Delta^2$ in $\cst[\Ga]$. 
It is tempting to conjecture that every finitely generated nilpotent 
group $\Ga$ satisfies $\square\ll\Delta^2$. 
Had it been true that $\square\pprec\Delta^2$ for a given group $\Ga$, 
it would have been able to rigorously prove this by computer calculations 
because $\square$ is an order unit for $I^4[\Ga]$ (modulo a quantitative 
estimate, see \cite{nt2}). 
However, we will observe here that $\square\not\pprec\Delta^2$ in $\IR[\bH]$. 
Hence, unlike property (T), property $\mathrm{H}_{\mathrm{T}}$ is probably 
not characterized by a ``simple"\footnote{The quantifier elimination techniques, 
which the author is not familiar with, may be relevant.} 
inequality in the real group algebra. 
This spoils the current methods of proving something like Main Theorem by 
computer calculations. 
(Note that $\EL_n(\IZ\langle t\rangle)$ has the Heisenberg group $\bH_{n-2}$ 
as a quotient and the analogous statement to the following proposition holds 
true for this group.)

\begin{prop}\label{prop:nonpositivity}
Let $\bH$ be the integral Heisenberg group and $z:=[x,y]$ be 
as described in the beginning of Section~\ref{sec:heisenberg}. 
Then $(1-z)^*(1-z) \not\pprec\Delta^2$ in $\IR[\bH]$. 
Moreover, 
\[
\overline{\Sigma^2 I^2[\bH]} \neq I^4[\bH]^\her \cap \overline{\Sigma^2\IR[\bH]}.
\]
\end{prop}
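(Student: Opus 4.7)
The plan is to pass to the associated graded ring $\operatorname{gr}\IR[\bH] = \bigoplus_{k\geq 0} I^k[\bH]/I^{k+1}[\bH]$ of the augmentation filtration, and extract a contradiction from the degree-$4$ component. Introduce symbols $X = [1-x]\in I/I^2$, $Y = [1-y]\in I/I^2$ (degree $1$) and $Z_{\mathrm{Lie}} = [1-z]\in I^2/I^3$ (degree $2$). The identity $(1-x)(1-y) - (1-y)(1-x) = (z-1)yx \equiv -(1-z)\pmod{I^3[\bH]}$ yields the graded relation $YX = XY + Z_{\mathrm{Lie}}$, with $Z_{\mathrm{Lie}}$ central, and the involution acts at the leading order by $X^*\equiv -X$, $Y^*\equiv -Y$, $Z_{\mathrm{Lie}}^*\equiv -Z_{\mathrm{Lie}}$.

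Suppose, for contradiction, $R\Delta^2 - (1-z)^*(1-z) = \sum_i \xi_i^*\xi_i$ for some $R>0$; by Theorem~\ref{thm:sqher} we may take $\xi_i\in I^2[\bH]$ and write $\bar\xi_i = a_iX^2 + b_iXY + c_iYX + d_iY^2$ for the degree-$2$ leading term. A PBW reduction (whose key identity is $Y^2X^2 = X^2Y^2 + 4Z_{\mathrm{Lie}}XY + 2Z_{\mathrm{Lie}}^2$) yields
\[
\Delta^2 \equiv X^4 + Y^4 + 2X^2Y^2 + 4Z_{\mathrm{Lie}}XY + 2Z_{\mathrm{Lie}}^2, \qquad (1-z)^*(1-z) \equiv -Z_{\mathrm{Lie}}^2 \pmod{I^5[\bH]},
\]
while $\bar\xi_i^*\bar\xi_i$ contributes $(b_i+c_i)^2 + 2a_id_i$ to the $X^2Y^2$ coefficient and $b_ic_i + 2a_id_i$ to the $Z_{\mathrm{Lie}}^2$ coefficient in $I^4/I^5$. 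Matching the two coefficients of $R\Delta^2 - (1-z)^*(1-z)$ forces $\sum_i[(b_i+c_i)^2 + 2a_id_i] = 2R$ and $\sum_i[b_ic_i + 2a_id_i] = 2R+1$; subtracting gives $\sum_i(b_i^2+b_ic_i+c_i^2) = -1$, contradicting $b^2+bc+c^2 = (b+c/2)^2 + \tfrac{3}{4}c^2 \geq 0$ and proving the first assertion.

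For the second assertion, the analogous reduction applied to $\square$ yields $\square \equiv X^4 + Y^4 + 2X^2Y^2 + 4Z_{\mathrm{Lie}}XY \pmod{I^5[\bH]}$: the $Z_{\mathrm{Lie}}^2$ term is absent because the orderings in the definition of $\square$ exactly cancel the commutator contribution $2Z_{\mathrm{Lie}}^2$ present in $\Delta^2$. Adding $\varepsilon\square$ to $R\Delta^2 - (1-z)^*(1-z)$ raises the $X^2Y^2$ coefficient to $2(R+\varepsilon)$ without touching the $Z_{\mathrm{Lie}}^2$ coefficient $2R+1$, so the same subtraction gives $\sum_i(b_i^2+b_ic_i+c_i^2) = 2\varepsilon - 1$, still negative for $\varepsilon < 1/2$. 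Hence $R\Delta^2 - (1-z)^*(1-z)\notin\overline{\Sigma^2 I^2[\bH]}$ for any $R$, whereas Corollary~\ref{cor:XYZ1} gives $4\Delta^2 - (1-z)^*(1-z) \geq 0$ in $\cst[\bH]$ and thus $4\Delta^2 - (1-z)^*(1-z) \in I^4[\bH]^\her\cap\overline{\Sigma^2\IR[\bH]}$, establishing the strict inclusion.

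The principal technical burden is the explicit degree-$4$ PBW bookkeeping, in particular the reductions $YX^2Y = X^2Y^2 + 2Z_{\mathrm{Lie}}XY$, $XYXY = X^2Y^2 + Z_{\mathrm{Lie}}XY$, $YXYX = X^2Y^2 + 3Z_{\mathrm{Lie}}XY + Z_{\mathrm{Lie}}^2$, $XY^2X = X^2Y^2 + 2Z_{\mathrm{Lie}}XY$, and the central identity $Y^2X^2 = X^2Y^2 + 4Z_{\mathrm{Lie}}XY + 2Z_{\mathrm{Lie}}^2$; once these are in hand the contradiction is purely linear algebraic.
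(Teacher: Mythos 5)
Your graded-algebra computations are all correct: the relation $\uy\ux\equiv\ux\uy+\uz$ modulo $I^3[\bH]$, the reductions $YX^2Y=X^2Y^2+2Z_{\mathrm{Lie}}XY$, $XYXY=X^2Y^2+Z_{\mathrm{Lie}}XY$, $YXYX=X^2Y^2+3Z_{\mathrm{Lie}}XY+Z_{\mathrm{Lie}}^2$, $XY^2X=X^2Y^2+2Z_{\mathrm{Lie}}XY$, $Y^2X^2=X^2Y^2+4Z_{\mathrm{Lie}}XY+2Z_{\mathrm{Lie}}^2$, the images of $\Delta^2$, $(1-z)^*(1-z)$ and $\square$ modulo $I^5[\bH]$, and the contributions $(b+c)^2+2ad$ and $bc+2ad$ to the $X^2Y^2$- and $Z_{\mathrm{Lie}}^2$-coefficients of a square, as well as the reduction to $\xi_i\in I^2[\bH]$ via Theorem~\ref{thm:sqher} and the use of $4\Delta^2-(1-z)^*(1-z)\geq0$ in $\cst[\bH]$ (Corollary~\ref{cor:XYZ1}) for the ``moreover'' part. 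Your argument is in effect the dual form of the paper's: the paper writes down an explicit selfadjoint functional on $I^4[\bH]/I^5[\bH]$ with positive semidefinite Gram matrix on $I^2/I^3$ which vanishes on $\Delta^2$, equals $4$ on $\square$ and $2$ on $(1-z)^*(1-z)$; your infeasibility-of-coefficients argument extracts essentially the same separating functional (``coefficient of $Z_{\mathrm{Lie}}^2$ minus coefficient of $X^2Y^2$'').

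There is, however, one genuine gap. Every coefficient-matching step presupposes that the nine ordered monomials $\ux^i\uy^j\uz^k$ with $i+j+2k=4$ are linearly independent in $I^4[\bH]/I^5[\bH]$, i.e.\ that the associated graded algebra of the augmentation filtration is, in degree $4$, the expected graded enveloping algebra with no further collapse. Your ``PBW reduction'' only verifies relations that do hold (which gives spanning, a formal fact); it never rules out additional relations. If there were a hidden linear relation involving $Z_{\mathrm{Lie}}^2$ and $X^2Y^2$ in $I^4/I^5$, the two equations you extract from a hypothetical sum-of-squares decomposition would not be forced, and the contradiction $\sum_i(b_i^2+b_ic_i+c_i^2)<0$ would evaporate; the same issue affects the $\varepsilon\square$ variant in your second step. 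This independence is exactly the content of the paper's Lemma~\ref{lem:linindep}, whose proof is not formal: it uses the representations $\pi_\theta$ and a limiting comparison with the abelianization as $\theta\searrow0$. So to complete your proof you must either prove this dimension count (as the paper does) or invoke a general result identifying the graded group algebra in characteristic zero with the enveloping algebra of the graded Lie ring of $\bH$; with that ingredient supplied, your argument is complete and agrees with the paper's.
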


The proof of $\overline{\Sigma^2 I[\Ga]} = I[\Ga]^\her \cap \overline{\Sigma^2\IR[\Ga]}$ 
given in \cite{ncrag} is based on Schoenberg's theorem that any positive linear functional 
on $I[\Ga]$ is approximable by those that extend on $\IR[\Ga]$. 
The above proposition says there is no good enough analogue of Schoenberg's theorem 
for augmentation powers. 
For the proof of the proposition, 
we need a description of the graded vector 
space $\cdots\supset I^4[\bH]\supset  I^5[\bH]\supset\cdots$. 
To ease notation, we write $\ux := 1-x$ etc.\ and observe that 
$\uz \in \cZ(\IR[\bH]) \cap I^2[\bH]$ and 
\[
\uy \ux = \ux \uy + \uz -\uz \ux - \uz \uy +\uz \uy \ux \in
  \ux \uy + \uz + I^3[\bH].
\]

\begin{lem}\label{lem:linindep}
For every $n\in\IN$, the set $\{ \ux^i \uy^j \uz^k + I^n[\bH] : i,j,k\geq0,\,i+j+2k<n\}$ forms a basis for $\IR[\bH]/I^n[\bH]$. 
In particular 
\[
\dim  I^n[\bH]/I^{n+1}[\bH]=(\lfloor n/2\rfloor +1)(n-\lfloor n/2\rfloor +1). 
\]
\end{lem}
\begin{proof}
We first observe that the asserted set spans $\IR[\bH]/I^n[\bH]$. 
Indeed, this follows from the above equation for $\uy \ux$ 
and the general fact that 
$1-uv = (1-u)+(1-v)-(1-u)(1-v)$ and $1-u^{-1} = -(1-u)+(1-u^{-1})(1-u)$ 
for every $u, v\in\bH$.
It is left to show that the asserted set is also linearly independent. 
Suppose that  $\xi:=\sum_{ i+j+2k<n } \alpha_{i,j,k} \ux^i \uy^j \uz^k \in I^n[\bH]$. 
By considering the abelianization $\pi^\ab\colon \cst[\bH]\to\cst[\IZ^2]$, 
one sees $\alpha_{i,j,k}=0$ whenever $k=0$. 
It follows that $\xi\in I^n[\bH] \cap \uz\IR[\bH]$. 
We claim that $I^n[\bH]\cap \uz \IR[\bH]=\uz I^{n-2}[\bH]$ for $n\geq2$. 
Since $\uz$ is not a zero divisor in $\IR[\bH]$ (e.g., because 
$\pi_\theta(\uz)$ are invertible for $\theta\in(0,1)$), 
Lemma would follow from this claim by induction. 

The homomorphisms $\IR[\langle x \rangle] \hookrightarrow \IR[\bH]$ and 
$\IR[\langle y \rangle] \hookrightarrow \IR[\bH]$ extend to a linear injection 
$\sigma\colon \IR[\langle x \rangle] \otimes \IR[\langle y \rangle] 
\hookrightarrow \IR[\bH]$, $\xi\otimes\eta\mapsto \xi\eta$, with the left inverse 
$\pi^\ab\colon \IR[\bH]\to\IR[\IZ^2]\cong \IR[\langle x \rangle] \otimes \IR[\langle y \rangle]$. 
Since $\uy \ux \in \ux \uy + \uz\IR[\bH]$ and likewise for $\ux^*$ 
and $\uy^*$ (thanks to suitable symmetries $x\leftrightarrow x^{-1}$ and 
$y\leftrightarrow y^{-1}$ on $\bH$), one has 
--\[
I^n[\bH]\cap\uz\IR[\bH] \subset (\ran\sigma + \uz I^{n-2}[\bH])\cap\ker\pi^\ab=\uz I^{n-2}[\bH].
\]
This proves the claim.
\end{proof}

\begin{proof}[Proof of Proposition~\ref{prop:nonpositivity}]
We observe that in $I^4[\bH]/I^{5}[\bH]$
\begin{align*}
( \ux \ux \uy \uy )^* &= \uy \uy \ux \ux = \uy\ux\uy\ux+\uy\ux\uz
 = \ux \uy \ux \uy + 3\ux\uy\uz+2\uz\uz = \ux\ux\uy\uy + 4\ux\uy\uz+2\uz\uz. 
\end{align*}
We define a linear functional $\vp$ on $I^4[\bH]/I^{5}[\bH]$ by 
$\vp(\ux^4)=\vp(\uy^4)=1$, $\vp(\uz^2)=-2$,  $\vp(\ux^2\uy^2)=-1$, 
$\vp(\ux\uy\uz)=1$, and zero on all the other basis elements. 
Then, the linear functional $\vp$ is selfadjoint. Moreover, 
with respect to the basis $\{\ux\ux,\ux\uy,\uy\ux,\uy\uy\}$ for $I^2[\bH]/I^3[\bH]$, 
the bilinear form $(\xi,\eta)\mapsto\vp(\xi^*\eta)$ is represented by the matrix 
\[
\left[\begin{matrix}
 1 & 0 & 0 & -1\\ 0 & 1 & 0 & 0\\ 0 & 0 & 1 & 0 \\ -1 & 0 & 0 & 1
\end{matrix}\right].
\]
Since this matrix is positive semidefinite, the linear functional is positive on $I^4[\bH]$, 
by Theorem~\ref{thm:sqher}. 
One sees that $\vp(\uz^*\uz)=-\vp(\uz\uz)=2>0$, $\vp(\square)=4$, and 
\[
\vp(\Delta^2)=\vp((\ux\ux+\uy\uy)(\ux\ux+\uy\uy))=0.
\]
Therefore there cannot be $R>0$ such that $\uz^*\uz\preceq R\Delta^2 + \frac{1}{4}\square$. 
It follows that $4\Delta^2-\uz^*\uz \notin \overline{\Sigma^2 I^2[\bH]}$, while 
$4\Delta^2-\uz^*\uz\in  I^4[\bH]^\her \cap \overline{\Sigma^2\IR[\bH]}$ by Corollary~\ref{cor:XYZ1}.
\end{proof}
\section{Property $(\tau)$}\label{sec:tau}
We say a finitely generated group $\Ga=\langle S \rangle$ has \emph{property $(\tau)$} 
with respect to a family $\{ \Ga_i\}$ of finite quotients $\Ga\twoheadrightarrow\Ga_i$ if 
there is $\delta>0$ such that any unitary representation 
$\pi$ of $\Ga$ that factors through some $\Ga\twoheadrightarrow \Ga_i$ 
either admits a nonzero $\pi(\Ga)$-invariant vector or admits 
no unit vector $v$ such that $\max_{s\in S}\| v - \pi(s) v\| \le\delta$.
This is equivalent to that the Cayley graphs of $\{\Ga_i\}$ w.r.t.\ 
the generating subset $S$ form an expander family. 
In case the family $\{\Ga_i\}_i$ is the set of all finite quotients of $\Ga$, 
it is simply said $\Ga$ has property $(\tau)$. 
See \cite{kowalski} for a comprehensive treatment of expander graphs. 
By Main Theorem, $\EL_n(\cS)$ has property (T) if $\cS$ is a finitely generated 
\emph{irng} (i.e., a rng which is idempotent, $\cS=\cS^2$, see \cite{mot}) 
and $n$ is large enough. 
Corollaries~\ref{cor:kaz} and \ref{cor:tau} say this happens uniformly for \emph{finite} commutative irngs 
with a fixed number of generators. 

\begin{proof}[Proof of Corollary~\ref{cor:kaz}]
Let $n_0$ be as in Main Theorem for $\IZ\langle T_1,\ldots,T_d,S_1,\ldots,S_d\rangle$
 and $n\geq n_0$. 
By Main Theorem applied to 
$T_r\mapsto t_r^k$ and $S_r\mapsto t_r^{k+1}$, 
there is $\ve>0$ such that 
\[
\Delta_k:=\sum_{i\neq j}\sum_{r=1}^d (1-e_{i,j}(t_r^k))^*(1-e_{i,j}(t_r^k)) \in \IR[\EL_n(\IZ\langle t_1,\ldots,t_d\rangle)]
\]
(so $\Delta_1=\Delta$) satisfy
\[
(\Delta_k + \Delta_{k+1})^2 \geq \ve (\Delta_{2k}+\Delta_{2k+1}+\Delta_{2k+2})
\]
for all $k$. 
We may also assume that $\ve>0$ satisfies $\Delta_1^2\geq\ve\Delta_2$. 

Let $\pi,\cH$ and $v$ be given for $\EL_n(\IZ\langle t_1,\ldots,t_d\rangle)$ 
(but we will omit writing $\pi$ to ease notation) and put 
\[
\delta:= (\sum_{i,j,r} \| v - e_{i,j}(t_r) v\|^2)^{1/2}
 = \ip{\Delta v,v}^{1/2}. 
\]
We assume $\delta<(1/2)^{10}$ and put $\rho:=\delta^{1/10}$. 
Recall the notation 
that $\IP_{\Delta\le (\delta/\rho)^2}$ stands for the spectral projection 
of $\Delta$ for the interval $[0,(\delta/\rho)^2]$. 
For $v_0:=\IP_{\Delta \le (\delta/\rho)^2} v$, one has $\|v-v_0\|\le\rho$ and 
\[
\ip{(\Delta_1+\Delta_2)v_0,v_0} \le \delta^2+\ve^{-1}(\delta/\rho)^4 =:\delta_0^2.
\]

Now, $v_1:=\IP_{\Delta_1+\Delta_2 \le (\delta_0/\rho^2)^2} v_0$ satisfies 
$\|v_0-v_1\|\le\rho^2$ and 
\[
\ip{(\Delta_2+\Delta_3)v_1,v_1} \le \ve^{-1}(\delta_0/\rho^2)^4=:\delta_1^2.
\]
We continue this and obtain $v_2:=\IP_{\Delta_2+\Delta_3 \le (\delta_1/\rho^3)^2} v_1,\ldots$ 
such that 
$\| v_k-v_{k+1} \|\le \rho^{k+2}$ and 
\[
\ip{(\Delta_{2^k}+\Delta_{2^k+1})v_k,v_k} \le \ve^{-1}(\delta_{k-1}/\rho^{k+1})^4=:\delta_k^2.
\]
Then the vector $w:=\lim_k v_k$ satisfies $\|v_k-w\|\le\rho^{k+1}$ (as $\rho<1/2$). 
Moreover,  
\begin{align*}
2^{-k} |\log\delta_k| &= 2^{-(k-1)}|\log\delta_{k-1}| -2^{-(k-1)}(k+1)|\log\rho| + 2^{-(k+1)} \log\ve\\
 &= |\log\delta_0| - \bigl(\sum_{m=1}^k 2^{-(m-1)} (m+1)\bigr) |\log\rho| + \frac{1}{2}(1-2^{-k})\log\ve\\
 &> |\log\delta|/10 
\end{align*}
if $\delta>0$ is small enough compared to $\ve>0$. 
Hence $\delta_k\to0$ at a double exponential rate.

We need to show $\lim_{l} \max_{i,j,r} \| w - e_{i,j}(t_r^l) w\| =0$.
We first observe that 
\[
\| w - e_{i,j}(t_r^{2^k}) w\| \le 2\| v_k-w\| + \delta_k \le \rho^{k}+\delta_k
\]
Let $l$ be given. Take $k=k(l)$ such that $l\in[2^k,2^{k+1})$ and write 
$l=  2^k+\sum_{m=0}^{k-1} a(m) 2^m$ with $a(m)\in\{0,1\}$. 
Then for $b:=\sum_{m=0}^{\lfloor k/2\rfloor-1} a(m)2^m$,
one has 
\[
\| e_{i,j}(t_r^l) w - e_{i,j}(t_r^{2^k+b}) w \|
 \le \sum_{m=\lfloor k/2\rfloor}^{k-1} a(m)(\rho^m+\delta_m),
\]
which tends to $0$ as $l\to\infty$. 
Observe that the recurrence relation $p_0:= 2^{k-\lfloor k/2\rfloor}$ 
and $p_{m+1} := 2p_m + a(\lfloor k/2\rfloor-1-m)$ gives 
$p_{\lfloor k/2\rfloor}=2^k+b$. 
Now by arguing as in the previous paragraph, 
but starting at $v_{k-\lfloor k/2\rfloor}$ and using 
$(\Delta_{p_m}+\Delta_{p_m+1})^2\geq \ve(\Delta_{p_{m+1}}+\Delta_{p_{m+1}+1})$, 
one obtains 
\[
\| v_{k-\lfloor k/2\rfloor} - e_{i,j}(t_r^{2^k+b}) v_{k-\lfloor k/2\rfloor} \|
 \le \rho^{k-\lfloor k/2\rfloor} + \delta_k \to 0.
\]
Since $\| v_{k-\lfloor k/2\rfloor}-w\|\to0$ as $l\to\infty$, this completes the proof. 
\end{proof}

We give a proof of the remark that was made after Corollary~\ref{cor:kaz}.
Let $\cR:=\IZ\langle t_1,\ldots,t_d\rangle$. 
Since $\EL_n(\cR/\cR^l)$ is nilpotent, 
there is a \emph{proper} $1$-cocycle $b_l$ (see Section 2.7 in \cite{bhv} or Section 12 in \cite{bo}). 
We view $b_l$ as $1$-cocycles on $\EL_n(\cR)$ and 
consider $b:=\sum_l^{\oplus} b_l$, which we may 
assume convergent pointwise on $\EL_n(\cR)$. 
We denote by $\pi_k$ the Gelfand--Naimark representation 
associated with the positive definite function $\vp_k(x):=\exp(-\frac{1}{k}\|b(x)\|^2)$.
Then, the representation $\pi:=\bigoplus\pi_k$ 
simultaneously admits asymptotically invariant vectors and 
a weak operator topology null sequence $x_l \in\EL_n(\cR^l)$. 

\begin{proof}[Proof of Corollary~\ref{cor:tau}]
Let $\cR^1:=\IZ[t_1,\ldots,t_d]$ denote 
the unitization of $\cR:=\IZ\langle t_1,\ldots,t_d\rangle$. 
Any quotient map $\cR\twoheadrightarrow\cS$ with $\cS$ unital 
gives rise to a group homomorphism 
$\EL_n(\cR^1)\twoheadrightarrow\EL_n(\cS)$ that 
extends $\EL_n(\cR)\twoheadrightarrow\EL_n(\cS)$.
We need to show that an orthogonal representation of $\EL_n(\cR^1)$ 
which factors through $\EL_n(\cS)$ has a nonzero invariant 
vector, provided that it has almost $\EL_n(\cR)$ invariant vector. 
Since we know $\EL_n(\cR^1)$ has property (T), 
it suffices to show that every almost $\EL_n(\cR)$ invariant 
vector is also almost $\EL_n(\IZ1)$ invariant. 
The latter is true when $\cS$ is finite.
Indeed, the vector $w$ in Corollary~\ref{cor:kaz} 
is invariant under those $e_{i,j}(t_r^{l_0})$ 
such that $t_r^{l_0}$ is an idempotent in the quotient $\cS$.  
Since a finite commutative ring is a direct sum of local rings 
(see, e.g., \cite{kn}), the rng generated by such idempotents 
contains the identity of $\cS$ and hence $w$ is invariant under $\EL_n(\IZ1)$. 
\end{proof}

\end{document}